\DeclareMathAlphabet{\mathpzc}{OT1}{pzc}{m}{it}
\newtheorem{te}{Theorem}[section]
\newtheorem{prop}[te]{Proposition}
\numberwithin{equation}{section}
\theoremstyle{definition}
\newtheorem{defin}[te]{Definition}
\theoremstyle{remark}
\newtheorem{os}[te]{Remark}
\def\bbR{{\mathbb R}}
\def\bbV{{\mathbb V}}
\def\bbR{{\mathbb R}}
   \renewcommand{\ref}[1]{(\origref{#1})}}
\begin{document}

	\title[]{On discrete-time semi-Markov processes }
	\keywords{Semi-Markov chains, fractional equations, discrete time models, time-changed processes, stable processes, random walk convergence.}
	\date{\today}
	\subjclass[2010]{60K15-60J10-60G50-60G51}

	\author{Angelica Pachon${}^1$}
	\address{${}^1$Faculty of Computing, Engineering and Science, University of South Wales, UK}
	
	\author{Federico Polito${}^2$}
	\address{${}^2$Mathematics Department ``G.~Peano'', University of Torino, Italy}
	
	\author{Costantino Ricciuti${}^2$}
	%\address{${}^3$Department of Statistical Sciences, Sapienza University of Rome, Italy}

	\begin{abstract}

		In the last years, several authors studied a class of continuous-time  semi-Markov processes
		obtained by time-changing  Markov processes by  hitting times of independent subordinators. Such processes are governed by integro-differential convolution equations of generalized fractional type.
		The aim of this paper is to develop a discrete-time counterpart of such a theory and to show relationships and differences with respect to the continuous time case.
		We  present  a class of discrete-time semi-Markov chains which can be constructed as time-changed Markov
		chains and we obtain the related governing convolution type equations.
		Such processes converge weakly to those in continuous time  under suitable scaling limits.
	\end{abstract}

	\maketitle
		
		\tableofcontents

		\section{Introduction}

			It is well-known that the memoryless property of homogeneous Markov processes imposes restrictions
			on the waiting time spent in a state, which must be either exponentially distributed (in
			the continuous time case) or geometrically distributed (in the discrete time case). Indeed, the exponential and the geometric distributions are the only to enjoy the lack of memory property.
			However, in many applied models it is useful to  relax the Markov assumption in order to allow
			arbitrarily distributed waiting times in any state. This leads to semi-Markov
			processes. The theory of semi-Markov processes was introduced by L\'evy \cite{levy} and Smith \cite{smith}  and  was developed in many subsequent works, such as \cite{cinlarsemi,fellersemi,gihman,jacod,koro,kurtz,pykefinite,pykeinfinite}.
			  
			Since their introduction, semi-Markov processes have been mostly studied in the continuous time case, while discrete time processes are rarer in the literature (see e.g.\ \cite{barbu} and the references therein). Time is usually assumed to be continuous, even if some physical theories claim that it could be discrete; however, it is true that we observe nature at discrete time instants.
			Moreover, in many applications the time scale is intrinsically discrete. For instance, in DNA analysis, any approach  is based on discrete time because one deals with (discrete) sequences of four bases: A, T, C, G. Also, the techniques of text recognition are based on discrete-time models. In reliability theory, one could be interested in the number of times (e.g.\ days, hours) that a specific event occurs. Thus, discrete-time semi-Markov processes undoubtedly deserve a  more in-depth analysis and the aim of the present paper is to give a contribution in this direction. 

			In this paper we study the  large class of semi-Markov chains which is generated by time-changing a discrete-time Markov chain by discrete-time renewal counting processes, that is with compositional inverses of increasing random walks with i.i.d.\ positive integer jumps.			
			A little variant of such processes is proved to be governed by equations of the form
			\begin{align}
				\sum _{\tau =0}^\infty \bigl ( p(x,t)-p(x,t-\tau) \bigr) \mu (\tau) &=\mathcal{G}_x p(x,t), \qquad t\in \mathbb{N}, \label{seconda equazione generale}
			\end{align}
			where $p(x,t)$ is the probability that the process is in state $x$ at time $t$, $\mu$ is a probability mass function supported on the positive integers and $\mathcal{G}_x$ is related to the generator of the original Markov chain. In the case in which the waiting times follow a discrete Mittag--Leffler distribution (see Section \ref{remarkable} for its definition), equation \ref{seconda equazione generale} reduces to
			\begin{align}
				(I-\mathcal{B})^\alpha p(x,t)=\mathcal{G}_x p(x,t) \qquad t\in \mathbb{N}, \qquad \alpha \in (0,1), \label{intro b}
			\end{align}
			where $\mathcal{B}$ is the backward shift operator in the time-variable, that is $\mathcal{B}p(x,t)=p(x,t-1)$, while $I-\mathcal{B}$ is the discrete-time derivative and $(I-\mathcal{B})^\alpha$ is its fractional power.

		   	By this construction,  a class of discrete-time semi-Markov processes arises, which  enjoys interesting statistical properties and is suitable  for some types of applications in which time is intrinsically discrete. For instance, in \cite{barbu}, the authors construct nonparametric estimators for the main characteristics of discrete-time semi-Markov systems and study their asymptotic behavior; they also show some applications in  reliability theory and biology. Note that the mathematical techniques in \cite{barbu} are specific to discrete time and do not fit well with the continuous time case.

			Recently, the theory of semi-Markov processes in continuous time has regained much interest. The literature concerning this topic is vast: see for example \cite{becker,KoloCTRW,kolokoltsov,meerscheflimctrw,meertri,meerstra,meertoa,straka}. Consult also  \cite{Garra,o,ricciuti}, where the theory has been extended to models of motions in heterogeneous media. Following the theory, a semi-Markov process of this kind can be obtained by time-changing  a Markov process by the inverse hitting time of a subordinator and can be seen as scaling limits of continuous-time random walks (CTRWs).
			Such processes are known to be governed (see the review in Section \ref{rev}) by integro-differential equations, which,  in the most general  form, can be written as
			\begin{align}
				\int _0^\infty (p(x,t)-p(x,t-\tau))\nu (d\tau)= G_xp(x,t), \qquad t\in \mathbb{R}^+, \label{equazione generale}
			\end{align}
			where $G_x$ is the generator of the original Markov process, $\nu$ is the L\'evy measure of the underlying subordinator, while the operator on the left-hand side is usually called generalized fractional derivative.
 The reason of this name is that in the case where the random time is an inverse $\beta$-stable subordinator, equation \ref{equazione generale} reduces to
\begin{align}
\frac{\partial ^\beta}{\partial t^\beta}p(x,t)= G_xp(x,t), \qquad t\in \mathbb{R}^+, \label{zaza}
\end{align}
where $\frac{d^\beta}{dt^\beta}$ is the Caputo fractional derivative of order $\beta \in (0,1)$.

			Note that, formally, \ref{equazione generale} is the continuous version of  \ref{seconda equazione generale}, in the same way as \ref{zaza}  is the continuous version of \ref{intro b}.
			Here we prove that a similar but non equivalent scheme holds in discrete time.
			In fact our treatise on the discrete-time case goes actually beyond the classical scheme of the standard CTRW-theory, essentially for three reasons.
			First, the class of discrete-time processes studied in this paper is not trivially given by sampling continuous-time semi Markov processes at integer times (see, for example, the discussion in remark \ref{abc}).
			Second, this paper includes  the case where the original process $X(n)$ is a generic discrete-time Markov chain on a general state space, thus without restricting us the to random walks with i.i.d.\ jumps in $\mathbb{R}^d$. Third, we show that a connection with (generalized) time-fractional equations also exists in discrete time, and not only after the continuous-time limit.  

			The structure of the paper is the following. In Section \ref{nuova}, besides a brief literature overview, we introduce the time change of discrete-time Markov chains and we present renewal chains and their inverse counting processes; special attention is  devoted to the Bernoulli process  (related to identical sequential  trials) and to the Sibuya process (related to sequential trials with memory), which are discrete-time approximations of the Poisson process and the inverse stable subordinator, respectively.
			Section \ref{main} contains our main results on discrete-time semi-Markov chains and generalized fractional difference equations, together with some results on path space convergence in the continuous time limit.  Finally, Section \ref{remarkable} is devoted to the so-called fractional Bernoulli counting processes, which are  discrete-time approximations of the  fractional Poisson process studied in several papers such as \cite{Beghin2,Beghin,Kumar1,mainardi,meerpoisson}.

\section{Literature overview and preliminary results}\label{nuova}

\subsection{Time-change of discrete-time processes}

For the sake of clarity, we  recall two important definitions in probability theory: $n$--divisibility and infinite divisibility. A random variable $X$ is said to be $n$--divisible if there exist i.i.d. random variables $Y_1, Y_2, ..., Y_n$, such that 
\begin{align*}
X\overset{d}{=} Y_1+Y_2+.... +Y_n.
\end{align*}
Instead, a random variable $X$ is said to be infinitely divisible if for each $n\in \mathbb{N}$ there exist i.i.d. random variables $Y^n_1, Y^n_2, ..., Y^n_n$ such that 
\begin{align*}
X\overset{d}{=} Y^n_1+ Y^n_2+ ... + Y^n_n.
\end{align*}
Both definitions are actually connected to stochastic processes  with stationary and independent increments. We briefly recall some related facts, for a more in-depth discussion  consult, for example, [\cite{steutel}, Chapter 1].
On  one hand, $n$--divisibility is related to  random walks defined by the partial sums
\begin{align}
X(n) = \sum _{j=1}^n \Delta_j \qquad n\in \mathbb{N}, \label{primo random walk}
\end{align}
where the $\Delta_j$ are i.i.d. random variables.

On the other hand, the notion of infinite divisibility is intimately related to  L\'evy processes.
The definition of L\'evy process makes sense only in continuous time case. Indeed, a process is called L\'evy  if, besides having independent and stationary increments, it is continuous in probability, i.e. $X(s)\overset{p}{\to} X(t)$ if $s\to t$ (see, for example, \cite{apple}  for basic notions).

 L\'evy processes with non decreasing sample paths are  called subordinators (for an overview see \cite{bertoins}). They are often used as models of random time for the construction of time-changed Markov processes (on this point consult, for example, \cite{apple}, \cite{satolevy},  \cite{librobern} and \cite{toaldopota}). A subordinator $\sigma (t)$ has Laplace transform
\begin{align*}
\mathbb{E} e^{-\eta \sigma (t)}= e^{-tf(\eta )}, \qquad \textrm{with} \,\,\,\,\, f(\eta)= b\eta+\int _0^\infty (1-e^{-\eta x}) \nu (dx), \qquad \eta >0,
\end{align*}
where $b>0$ and  the L\'evy measure $\nu$ is such that
$
\int _{(0,\infty)} (x \wedge 1) \nu (dx)<\infty.
$
We let $b=0$ and only consider strictly increasing subordinators, i.e. those such that $\int _0^\infty \nu (dx) = \infty$.  The inverse hitting time process
\begin{align*}
L(t)= \inf \{ x\geq 0 : \sigma (x) > t\}= \sup  \{ x\geq 0: \sigma (x)\leq t\}
\end{align*}
is non decreasing, and thus it can be used as a random time to construct time-changed processes.

In particular,   if $\{M(t)\}_{t \in \mathbb{R}^+}$ is a Markov process, the composition $\{M(L(t))\}_{t \in \mathbb{R}^+}$ is a semi-Markov process, which has been deeply studied, having an interesting connection to many different topics, such as anomalous diffusion, continuous time random walk limits and integro-differential and fractional equations (see \cite{merbook}, chapter 8 of \cite{kolokoltsov} and the references therein).

One of the main goals of this paper is to develop some aspects of the theory of time-change for discrete time Markov processes.
The discrete-time analogue of a subordinator is a random walk of type \ref{primo random walk} with positive integer jumps
\begin{align}
\sigma _d (n) = \sum _{j=1}^n Z_j, \qquad n\in \mathbb{N}, \qquad Z_j \in \mathbb{N}, \qquad \sigma _d(0)=0. \label{subordinatore discreto}
\end{align}
Its inverse process is given by 
 \begin{align}
L_d(t)= \max \{ n\in \mathbb{N}_0  : \sigma _d (n) \leq t    \}, \qquad t \in \mathbb{N}_0, \label{inverso subordinatore discreto}
 \end{align}
 where $\mathbb{N}_0=\mathbb{N}\cup \{0\}$.
 Indeed $L_d(t)=0$ for $0\leq t<Z_1$, $L_d(t)=1$ for $Z_1\leq t<Z_1+Z_2$, and so forth.

So, given a discrete-time Markov chain $\{ \mathcal{X}(t)\}_{t \in \mathbb{N}_0}$ and  an independent process  $\{L_d(t)\}_{t \in \mathbb{N}_0}$ of type \ref{inverso subordinatore discreto}, we here study the time-changed process $\{ \mathcal{X} (L_d(t))\}_{t \in \mathbb{N}_0}$.
We refer to \cite{steutel} for some definitions and techniques on the time change of discrete-time processes.

\subsection{Brief review on continuous-time semi-Markov chains} \label{rev}

We here summarize some facts known in the literature concerning the continuous-time case. Let us consider a continuous-time Markov chain $\{X(t)\} _{t \in \mathbb{R}^+}$  on the discrete space $\mathcal{S}$
\begin{align}
X(t)=X_n, \qquad     V_n \leq t<V_{n+1}, \qquad \textrm{with} \qquad V_0=0 ,\quad V_n= \sum _{k=0}^{n-1}E_k,
\label{definizione processo Markov}
\end{align}
where $\{X_n\}_{n=0,1,\dots}$, is a discrete-time Markov chain in $\mathcal{S}$, whose stochastic matrix is defined as
\begin{align*}
H_{ij}=P(X_{n+1}=j|X_n=i)
\end{align*}
and the waiting times $E_k$ are  exponentially distributed:
\begin{align}
P(E_k>t |X_k=i)=e^{-\lambda _it} \qquad t\geq 0. \label{distribuzione esponenziale}
\end{align}
The transition probabilities 
\begin{align*}
p_{ij}(t)= P(X(t)=j|X(0)=i), \qquad i,j \in \mathcal{S},
\end{align*}
are known to solve (see for example \cite{norris}) the Kolmogorov backward equations 
\begin{align}
\frac{d}{dt} p_{ij}(t)&= \sum _{l\in S} \lambda _i(H_{il}-\delta _{il})p_{lj}(t),  \label{equazioni kolmogorov classiche}  \qquad  p_{ij}(0) =\delta_{ij},
\end{align}
as well as the related forward equations.

We now consider a semi-Markov process  $\{Y(t)\} _{t \in \mathbb{R}^+}$ which  is constructed in the same way of \ref{definizione processo Markov} except for the distribution of the waiting times, which are no longer exponentially distributed:
\begin{align}
Y(t)=X_n \qquad     T_n \leq t<T_{n+1} \qquad \textrm{where}\quad T_0=0 ,\quad T_n= \sum _{k=0}^{n-1}J_k
,
\label{definizione processo semi-Markov}
\end{align}
where the waiting times follow an arbitrary continuous distribution
\begin{align*}
P(J_k>t|X_k=i)=\overline{F}_i(t).
\end{align*}
We are interested in a particular subclass of \ref{definizione processo semi-Markov}: taking any strictly increasing subordinator $\{\sigma (t)\}_{t\in \mathbb{R}^+}$, independent of $\{X(t)\}_{t\in \mathbb{R}^+}$, we assume  that the waiting times $J_0, J_1,\dots$ are such that
\begin{align}
P(J_k>t|X_k=i)=P(\sigma (E_k)>t|X_k=i), \label{intertempo caso continuo}
\end{align}
 where $E_k$ are distributed as in  \ref{distribuzione esponenziale}.
The main result concerning such a class of semi-Markov processes (which also justifies the choice of the compound exponential  law $J_k=\sigma (E_k)$ for the waiting times) is the following: let $L(t)$ be the right continuous inverse process of $\sigma (t)$, the following time-change relation holds: 
\begin{align*}
\{Y(t)\}_{t\in \mathbb{R}^+}=\{X(L(t))\}_{t \in \mathbb{R}^+}, 
\end{align*}
where $X$ and $L$ are independent.
A heuristic proof of this fact is the following: by using the definition \ref{definizione processo Markov}, we have
 \begin{align}
X(L(t))=X_n \qquad     V_n \leq L(t)<V_{n+1}
\end{align}
namely 
\begin{align*}
X(L(t))=X_n \qquad  \sigma (V_n^-) \leq \ t<\sigma (V_{n+1})
\end{align*}
and thus the waiting times are such that $J_n = \sigma (V_{n+1})-\sigma (V_n)\overset{d}{=} \sigma (V_{n+1}-V_n)= \sigma (E_n)$ (where  we used the fact that $\sigma $ has independent and stationary increments).

The transitions functions solve the following equation:
\begin{align}
\mathcal{D}_t \, p_{ij}(t) -\overline{\nu}(t)p _{ij}(0) = \sum _{l\in S} \lambda _i (H_{il}-\delta _{il})p_{lj}(t), \qquad p_{ij}(0)& =\delta_{ij},  \label{equazione frazionaria generalizzata}    
\end{align}
where $\lambda _i$ are the rates of the $E_n$, while  $\nu$ is the L\'evy measure of the subordinator $\sigma$, $\overline{\nu}(t)= \int _t^\infty \nu (dy)$  and
\begin{align*}
\mathcal{D}_t \, p_{ij}(t)=\int _0^\infty  \bigl ( p_{ij}(t)-p_{ij}(t-\tau) \bigr)\nu (d\tau)= \frac{d}{dt}\int _0^t p_{ij}(t-\tau)\overline{\nu}(\tau)d\tau.
\end{align*}
Equation \ref{equazione frazionaria generalizzata} is analogous to \ref{equazioni kolmogorov classiche}, but the time derivative $\frac{d}{dt}$ on the left side is replaced by the operator $\mathcal{D}_t$, which is sometimes called generalized fractional derivative. The reason of this name is that in the case where $\sigma$  is an $\alpha$-stable subordinator, we have $\nu (dx)= \alpha x^{-\alpha-1}dx/\Gamma (1-\alpha)$ and  \ref{equazione frazionaria generalizzata} reduces to 
\begin{align}
\frac{d^\alpha}{dt^\alpha}p_{ij}(t)-\frac{t^{-\alpha}}{\Gamma (1-\alpha)}p _{ij}(0)= \sum _{l\in S} \lambda _i (H_{il}-\delta _{il})p_{lj}(t), \qquad p_{ij}(0)&=\delta_{ij},  \label{equazione frazionaria vera}   
\end{align}
where 
\begin{align*}
\frac{d^\alpha}{dt^\alpha}p_{ij}(t) = \frac{d}{dt}\int _0^t p_{ij}(t-\tau) \frac{\tau^{-\alpha}}{\Gamma (1-\alpha)} d\tau
\end{align*}
is the Riemann--Liouville fractional derivative.
In such a case, the waiting times  have  distribution
\begin{align}
P(J_n>t|   X_n =i)= \mathcal{E} (-\lambda _i t^\alpha ) \label{Mittag Leffler distribution}
\end{align}
where $ \mathcal{E}(x)= \sum _{k=0}^\infty \frac{x^k}{\Gamma (1+\alpha k)}$ is the Mittag--Leffler function; indeed, by a simple conditioning argument, it is easy to check that  the composition  $J_n = \sigma (E_n)$ has Laplace transform
\begin{align}
\mathbb{E}\bigl (e^{-sJ_n}|  X_n=i \bigr )   = \frac{\lambda _i}{\lambda _i+s^{\alpha }} \qquad s\geq 0, \label{trasformata Mittag--Leffler}
\end{align}
which is also the Laplace transform of $-\frac{d}{dt}\mathcal{E} (-\lambda _i t^\alpha )$.
For analytical properties of the Mittag--Leffler function and its role in fractional calculus consult \cite{samko}; see also  \cite{beghin4} and \cite{Garra2} for some applications on relaxation phenomena.

There is an extensive literature concerning the topics recalled in this section: see for example \cite{becker,kolokoltsov3,KoloCTRW,kolokoltsov,merbook,meerscheflimctrw,meerstra,meertri,meertoa}.
See also \cite{scalas} for some applications and \cite{kolokoltsov4} for recent analytic results on fractional differential equations. Moreover, semi-Markov models of motion in heterogeneous media are studied in \cite{Garra,o,ricciuti}, where fractional equations of type \ref{equazione frazionaria vera}   of state dependent order $\alpha =\alpha (x)$ arise. Consult also \cite{orsrictoapota} where the authors study Markov processes time-changed by independent inverses of  additive  subordinators.

\begin{os}
An important case is that of processes making jumps of height 1, i.e. $H _{ij}= 1$ if $j=i+1$ almost surely.  If besides $\lambda _i= \lambda $  for all $i \in \mathcal{S}$, they can be constructed as Poisson processes time-changed by  inverses of subordinators (this is a class of  renewal counting processes including the so-called fractional Poisson process studied e.g. in  \cite{Beghin2,Beghin,Kumar1,mainardi,meerpoisson}). Other models of fractional point processes are studied in \cite{polito 1,polito 2,polito 3}.
Thus, our investigation in the discrete time starts from the discrete analogue of renewal processes, known as renewal chains, which are treated in the next subsection.
\end{os}

\subsection{Renewal chains}

In the following we will make extensive use of generating functions. We recall that    the generating function of a real sequence $\{ a_t \}_{t \in \mathbb{N}_0}$ is defined by the power series
\begin{align*}
\mathcal{G}_a(u)= \sum _{t=0}^\infty u^t a_t
\end{align*}
for all $u \in \mathbb{R}$ such that $|u|\leq R$,  where $R\geq 0$ is the radius of convergence. Since $\mathcal{G}_a$ can be differentiated term by term at all $u$ inside the radius of convergence,  the sequence $\{ a_t \}_{t \in \mathbb{N}_0}$ can be uniquely reconstructed from the generating function by setting
$
a_t= \mathcal{G}_a^{(t)}(0)/t!
$,
where $\mathcal{G}_a^{(t)}(\cdot)$ denotes the $t^{th}$ derivative of $\mathcal{G}_a(\cdot)$. A useful property is that the convolution of two sequences $\{ a_t \}_{t \in \mathbb{N}_0}$ and $\{ b_t \}_{t \in \mathbb{N}_0}$, which is defined as
$ \{a*b\}_{t\in \mathbb{N}_0} = \sum _{k=0}^t a_k b_{t-k}$, 
has generating function
$\mathcal{G}_{a*b}(u)= \mathcal{G}_a(u)\mathcal{G}_b(u)$.

We now recall the notion of renewal chain; for a deeper insight consult  [\cite{barbu}, Chapter 2].
Let $W$ be a positive, integer valued random variable. Let $W_0, W_1, \dots$, be i.i.d.  copies of $W$. Consider the increasing random walk
\begin{align}
T_n=W_0+W_1+\dots W_{n-1} \label{renewal chain Tn} \qquad n\in \mathbb{N} \qquad W_j\in \mathbb{N}    \qquad T_0=0.
\end{align}
Obviously  \ref{renewal chain Tn} has the same form of \ref{subordinatore discreto}, but here,
 intuitively, the $T_n$  should be seen as the successive instants when a specific
event occurs (and we call them renewal times) while the $W_n$ represent the waiting times. 
The process $\{T_n\}_{n\in \mathbb{N}_0}$ is  called  renewal chain or discrete-time renewal process.

Its inverse 
\begin{align}
C(t)=\max \{n\in \mathbb{N}_0: T_n\leq t     \}, \qquad t\in \mathbb{N}_0,  \label{processo di conteggio}
\end{align}
 is a counting process representing the number of renewals up to time $t$.
We now recall a useful result.

\begin{prop}
Let
$\mathcal{G}_{C }(m,u)= \sum _{t=0}^\infty u^t P(C(t)=m),\, |u|<1$,
be the generating function of the sequence $\{P(C(t)=m)\}_{t \in \mathbb{N}_0}$. 
Then
\begin{align}
\mathcal{G}_{C}(m,u)= \frac{1}{1-u}(\mathbb{E}u^W)^m (1-\mathbb{E}u^W). \label{formula utile per i counting processes}
\end{align}
\end{prop}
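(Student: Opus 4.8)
The plan is to start from the event description of the counting process and reduce everything to a convolution identity for generating functions. By the definition \ref{processo di conteggio}, the event $\{C(t)=m\}$ coincides with $\{T_m\leq t<T_{m+1}\}$, i.e.\ the $m$-th renewal has occurred by time $t$ but the $(m+1)$-th has not. Conditioning on the value of $T_m$ and using that $T_{m+1}=T_m+W_m$ with $W_m$ independent of $T_m$, I would write
\begin{align*}
P(C(t)=m)=\sum_{k=0}^{t}P(T_m=k)\,P(W>t-k),
\end{align*}
valid for every $m\in\mathbb{N}_0$ (with the usual convention $T_0=0$, so that $P(T_0=k)=\delta_{k,0}$ handles the case $m=0$). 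This exhibits $\{P(C(t)=m)\}_{t\in\mathbb{N}_0}$ as the convolution of the two sequences $\{P(T_m=k)\}_{k\in\mathbb{N}_0}$ and $\{\overline{F}_W(j)\}_{j\in\mathbb{N}_0}$, where $\overline{F}_W(j)=P(W>j)$.

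Next I would compute the two generating functions separately. Since $T_m$ is a sum of $m$ i.i.d.\ copies of $W$, the generating function of $\{P(T_m=k)\}_k$ is $(\mathbb{E}u^W)^m$; this follows from the multiplicativity of generating functions under convolution recalled just before the statement, and is consistent for $m=0$ since the empty sum gives the constant sequence with generating function $1$. For the tail sequence, I would use Fubini/Tonelli to swap the order of summation:
\begin{align*}
\sum_{j=0}^{\infty}u^j P(W>j)=\sum_{j=0}^{\infty}u^j\sum_{k=j+1}^{\infty}P(W=k)=\sum_{k=1}^{\infty}P(W=k)\sum_{j=0}^{k-1}u^j=\sum_{k=1}^{\infty}P(W=k)\,\frac{1-u^k}{1-u}=\frac{1-\mathbb{E}u^W}{1-u},
\end{align*}
where the rearrangement is justified for $|u|<1$ because all terms are nonnegative (or absolutely summable after taking $|u|$).

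Finally, applying the convolution property $\mathcal{G}_{a*b}(u)=\mathcal{G}_a(u)\mathcal{G}_b(u)$ to the decomposition above yields
\begin{align*}
\mathcal{G}_C(m,u)=(\mathbb{E}u^W)^m\cdot\frac{1-\mathbb{E}u^W}{1-u},
\end{align*}
which is exactly \ref{formula utile per i counting processes}. There is no serious obstacle here; the only points requiring a little care are the bookkeeping for the boundary case $m=0$ (where $T_0=0$ forces the convolution to collapse correctly) and the justification of the interchange of the two sums in the tail computation, both of which are routine given $|u|<1$ and the nonnegativity of the summands. Convergence of $\mathbb{E}u^W=\sum_{k\ge1}u^kP(W=k)$ for $|u|<1$ is automatic since it is dominated by $\sum_k|u|^k$.
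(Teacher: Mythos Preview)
Your proof is correct. The approach differs slightly from the paper's: you condition on the value of $T_m$ and express $P(C(t)=m)$ as the convolution of $\{P(T_m=k)\}_k$ with the survival sequence $\{P(W>j)\}_j$, then multiply the two generating functions. The paper instead writes $P(C(t)=m)=P(T_m\leq t)-P(T_{m+1}\leq t)$ and takes the generating function of each cumulative distribution (which is $\frac{1}{1-u}(\mathbb{E}u^W)^m$ and $\frac{1}{1-u}(\mathbb{E}u^W)^{m+1}$ respectively), obtaining the same factorisation by subtraction. Both routes are standard renewal-theory manipulations of comparable length; your version has the minor bonus of explicitly deriving the identity $\sum_{j\ge0}u^jP(W>j)=(1-\mathbb{E}u^W)/(1-u)$, which the paper derives separately later as \ref{funzione di sopravvivenza}, while the paper's difference-of-CDFs argument avoids the need to discuss the tail generating function at this stage.
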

\begin{proof}
Observe  that $\{ C(t)\geq m\} $ if and only if $\{T_m\leq t\}  $. Then $P(C(t)=m)= P(T_m\leq t, T_{m+1}>t)$. Furthermore, since $\{T_{m+1}\leq t\}$ implies $\{T_m\leq t\}$ we have   $P(C(t)=m)=P(T_m\leq t)-P(T_{m+1}\leq t)$, and computing the generating function of both members, \ref{formula utile per i counting processes} is obtained.
\end{proof}
We now focus on two types of renewal chains together with their related counting processes, which are called Bernoulli and Sibuya  counting processes, respectively. 
In the continuous-time limit, they converge to a Poisson process and to an inverse stable subordinator,  respectively.

\subsubsection{The Bernoulli counting process and the Poisson process} \label{section bernoulli}

 Consider a sequence of Bernoulli trials, i.e. independent trials such that at each time step you can record the occurrence of either 1 event (with probability $p$) or of $0$ events (with probability $q=1-p$). Let $N (t)$ be the number of events up to time $t$, which  follows a binomial distribution $P(N(t)=k)= \binom{t}{k}p^kq^{t-k}, k=0,...,t$. The waiting times  between successive events are independent geometric random variables.
 
\begin{defin}
A Bernoulli counting process, denoted by $\{N (t)\}_{t\in \mathbb{N}_0}$, is a counting process of the type \ref{processo di conteggio} such that the waiting times $M_i$, $i=0,1,\dots,$ have common geometric distribution $P(M_i=k)=pq^{k-1}, k\in \mathbb{N}$.
\end{defin}

By using \ref{formula utile per i counting processes} and the fact that $\mathbb{E}u^{M_i}= \frac{pu}{1-qu}$, the generating function of $\{P(N(t)=m)\}_{t\in \mathbb{N}_0}$ reads
\begin{align}
\mathcal{G}_{N}(m,u)= \frac{(pu)^m}{(1-qu)^{m+1}}.\label{funzione generatrice bernoulli}
\end{align}
Note that $\{N (t)\}_{t\in \mathbb{N}_0}$ is  a Markov process due to the independence of trials and  to the lack of memory property of geometric distribution.

Let now $p_k(t)= P(N(t)=k)$. A simple conditioning argument gives
\begin{align}
p_k(t)= q\, p_k (t-1)+p\, p_{k-1}(t-1)\qquad t\in \mathbb{N}. \label{prima equazione base del processo di Bernoulli}
\end{align}
Re-writing the first member as $p_k(t)= q\, p_k(t)+p\, p_k(t)$ and then dividing by $q$, one obtains a finite difference equation governing the Bernoulli counting process:
\begin{align}
(I-\mathcal{B})p_k(t)= -\lambda p_k(t)+\lambda p_{k-1}(t-1), \qquad t\in \mathbb{N}, \label{seconda equazione base del processo di Bernoulli}
\end{align}
where $\lambda =p/q$, $\mathcal{B}$ is the shift operator acting on the time variable such that $\mathcal{B}p(t)= p(t-1)$ and $I-\mathcal{B}$ is the discrete derivative acting on the time variable.

It is well-known that  the Bernoulli process converges to the Poisson process under a suitable scaling limit. To see this  intuitively (rigorous results on convergence will be given later), let the time steps have size $1/n$ (and thus  the geometric waiting times scale as $M_i\to M_i/n$) and  let the parameter $p$ of the geometric distribution be substituted by $\lambda/n$. In the limit $n\to \infty$ each $M_i$ converges to an exponential  random variable of parameter $\lambda$.

Moreover, one could formally observe that, under the above scaling, equation \ref{seconda equazione base del processo di Bernoulli} reads
\begin{align}
p_k(t)-p_k\bigl (t-\frac{1}{n} \bigr )= -\frac{\lambda}{n} p_k(t)+\frac{\lambda}{n} p_{k-1}\bigl (t-\frac{1}{n}\bigr ), \qquad t\in  \biggl \{\frac{1}{n}, \frac{2}{n},\dots \biggr \}. \label{terza equazione base del processo di Bernoulli}
\end{align}
Dividing by $1/n$ and letting $n\to \infty$, we formally get
\begin{align*}
\frac{d}{dt}p_k(t)= -\lambda p_k(t)+\lambda p_{k-1}(t), \qquad t\in \mathbb{R}^+,
\end{align*}
which is the forward equation governing the Poisson process.

\subsubsection{The Sibuya counting process and the inverse stable subordinator}\label{paragrafo sibuya}

Consider now a sequence of  trials, each having  two possible outcomes, such that the probability of success is not constant in time. If a success has just occurred, we assume that  the probability of success in the $r$-th successive trial is equal to  $\alpha /r$, where $\alpha \in (0,1)$. Unlike the Bernoulli process, which is Markovian, such a process has a memory, since it remembers the time elapsed from the previous success. In place of the geometric distribution of the Bernoulli process,  the time $Z$ in which the first success occurs here follows  the so-called  $Sibuya(\alpha)$ distribution (consult \cite{devroye,pillai})
\begin{align}
P(Z=k)& = (1-\alpha)\bigl (1-\frac{\alpha}{2} \bigr )....\biggl (1-\frac{\alpha}{k-1} \biggr )\frac{\alpha}{k} \notag \\
& = (-1)^{k-1}\binom{\alpha}{k} \qquad k=1,2,\dots \label{distribuzione di Sybuya}
\end{align}
having generating function
\begin{align}
\mathbb{E}u^Z= 1-(1-u)^\alpha \label{funzione generatrice Sibuya}.
\end{align}
On this point, it is interesting to note that  any discrete density  $\{p_k\}_{k\in \mathbb{N}}$ can be expressed as 
\begin{align*}
p_k=(1-\alpha _1)(1-\alpha _2)\dots (1-\alpha _{k-1})\alpha _k \qquad k\in \mathbb{N},
\end{align*}
for  $\alpha _k = p_k/ \sum _{r=k}^\infty p_r$, and thus an interpretation within the sequential trial scheme makes sense. The Sibuya case is obtained by assuming $\alpha _k=\frac{\alpha}{k}$.

Unlike the geometric distribution, which has an exponential decay, the Sibuya distribution has power-law decay, as (see formula 2.5 in \cite{merbook})
\begin{align}
P(Z=k)= (-1)^{k-1}\binom{\alpha}{k} \sim \frac{\alpha}{\Gamma (1-\alpha)}k^{-1-\alpha}, \qquad \textrm{as} \, \, k\to \infty.
\end{align}

Now, we consider a random walk of type \ref{subordinatore discreto} defined by the partial sums of i.i.d. Sibuya random variables
\begin{align}
\sigma _\alpha (n) = \sum _{j=1}^n Z_j \qquad n\in \mathbb{N}, \qquad \sigma _d(0)=0.\label{sub stabile discreto}
\end{align}
We define the process $\{L_\alpha(t)\}_{t\in \mathbb{N}_0}$ as the inverse of $\{\sigma_\alpha (t)\}_{t\in \mathbb{N}_0}$:
\begin{align}
L_\alpha(t)= \max \{ n\in \mathbb{N}_0  : \sigma _\alpha (n) \leq t    \} \qquad t \in \mathbb{N}_0.\label{inverso sub stabile discreto}
\end{align}

 We call $\{L_\alpha (t)\}_{t\in \mathbb{N}_0}$ Sibuya counting process, as it counts the number of successes up to time $t$ in the case of Sibuya trials. By \ref{formula utile per i counting processes} and \ref{funzione generatrice Sibuya}, the sequence $\{P(L_\alpha(t)=m)\}_{t \in \mathbb{N}_0}$ has generating function
\begin{align}
\mathcal{G}_{L_\alpha }(m,u)= (1-u)^{\alpha -1} \bigl [1-(1-u)^\alpha \bigr]^m. \label{funzione generatrice processo sibuya}
\end{align}
Note that \ref{funzione generatrice processo sibuya} can be expanded as
\begin{align*}
\mathcal{G}_{L_\alpha }(m,u)& = \, \sum _{t=0}^\infty u^t \biggl ( \sum _{r=0}^m \binom{m}{r}(-1)^r(-1)^t \binom{\alpha r+\alpha -1	}{t} \biggr )\\
&= \sum _{t=0}^\infty u^t \, \biggl ( \sum _{r=0}^m (-1)^r \binom {m}{r} \binom {t-\alpha r -\alpha	}{t}\biggr ),
\end{align*}
where in the last step we used that $(-1)^w\binom{a-1}{w}= \binom{w-a}{w	}$ for $a>0$. Hence $L_\alpha (t)$ has discrete density
\begin{align}
P(L_\alpha (t)=m)=\sum _{r=0}^m (-1)^r \binom {m}{r} \binom {t-\alpha r -\alpha	}{t},\qquad m=0,1,\dots. \label{distribuzione processo sibuya}
\end{align}

A crucial point is that the Sibuya counting process $\{L_\alpha (t)\}_{t\in \mathbb{N}_0}$ is a discrete-time approximation of the inverse stable subordinator. We remind that a stable subordinator $\{\sigma _*(t)\}_{t\in \mathbb{R}^+}$ is an increasing L\'evy process with Laplace transform $\mathbb{E}e^{-\xi \sigma _* (t)}= e^{-t\xi ^\alpha}$, $\alpha \in (0,1)$, while its inverse hitting time process is defined as
\begin{align}
L_*(t)= \inf \{x: \sigma _* (x)>  t\} = \sup \{x: \sigma _*(x)\leq t     \}. 
\end{align}
The following Proposition, regarding convergence of the one dimensional distribution,  is an anticipation of a more general result that will be treated in Proposition \ref{prop convergenze} where we will state that a suitable scaling of $\{\sigma_\alpha (\lfloor t\rfloor)\}_{t\in \mathbb{R}^+}$ and $\{L_\alpha (\lfloor t\rfloor)\}_{t\in \mathbb{R}^+}$ (where $\lfloor t\rfloor$ denotes the biggest integer less than or equal to $ t$) respectively converge to a stable subordinator and its inverse in the Skorokhod $J_1$ sense and also in the sense of finite dimensional distributions.

\begin{prop}
Let $n \in \mathbb{N}$, $t\in \mathbb{R}^+$. For any $t$, the random variable $n^{-\alpha} L_\alpha (\lfloor nt \rfloor)$ converges in distribution to $L_*(t)$ as $n\to \infty$.  
\end{prop}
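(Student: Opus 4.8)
The plan is to exploit the duality between the increasing walk $\sigma_\alpha$ and its inverse $L_\alpha$, together with the fact that the Sibuya law lies in the domain of attraction of an $\alpha$-stable subordinator. Fix $t>0$ and $x>0$, and set $m_n=\lceil n^{\alpha}x\rceil$. Using that $L_\alpha$ takes values in $\mathbb{N}_0$ together with the elementary equivalence $\{L_\alpha(s)\ge m\}=\{\sigma_\alpha(m)\le s\}$, which is immediate from~\ref{inverso sub stabile discreto} since $\sigma_\alpha$ is strictly increasing, one obtains
\begin{align*}
P\bigl(n^{-\alpha}L_\alpha(\lfloor nt\rfloor)\ge x\bigr)
&=P\bigl(L_\alpha(\lfloor nt\rfloor)\ge m_n\bigr)=P\bigl(\sigma_\alpha(m_n)\le\lfloor nt\rfloor\bigr)\\
&=P\!\left(\frac{\sigma_\alpha(m_n)}{m_n^{1/\alpha}}\le\frac{\lfloor nt\rfloor}{m_n^{1/\alpha}}\right).
\end{align*}
The right-hand side is then attacked by letting $n\to\infty$: the rescaled walk $\sigma_\alpha(m_n)/m_n^{1/\alpha}$ will converge in law to $\sigma_*(1)$, while $m_n\sim n^{\alpha}x$ forces $m_n^{1/\alpha}/n\to x^{1/\alpha}$ and hence the deterministic threshold $\lfloor nt\rfloor/m_n^{1/\alpha}\to t\,x^{-1/\alpha}$.

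The first ingredient is the stable limit theorem for $\sigma_\alpha$, and here the closed form of the Sibuya generating function is decisive. From~\ref{funzione generatrice Sibuya}, $\mathbb{E}u^{Z}=1-(1-u)^{\alpha}$, so for $\theta\ge0$
\begin{align*}
\mathbb{E}\bigl[e^{-\theta\sigma_\alpha(m)/m^{1/\alpha}}\bigr]=\Bigl(1-\bigl(1-e^{-\theta m^{-1/\alpha}}\bigr)^{\alpha}\Bigr)^{m}.
\end{align*}
Since $1-e^{-\theta m^{-1/\alpha}}=\theta m^{-1/\alpha}+o(m^{-1/\alpha})$, the base equals $1-\theta^{\alpha}/m+o(1/m)$, so this Laplace transform tends to $e^{-\theta^{\alpha}}$, which is exactly $\mathbb{E}e^{-\theta\sigma_*(1)}$ for the stable subordinator normalised as in the statement. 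By the continuity theorem for Laplace transforms of nonnegative variables, $\sigma_\alpha(m)/m^{1/\alpha}\Rightarrow\sigma_*(1)$ as $m\to\infty$, and in particular along $m_n\to\infty$. Note that the normalisation is exactly $m^{1/\alpha}$, with no slowly varying correction; this closed form is what makes that transparent.

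To conclude, I would combine the two limits. Since the law of $\sigma_*(1)$ is absolutely continuous, a P\'olya-type argument — convergence in distribution evaluated along a convergent sequence of thresholds at a continuity point of the limit distribution function — gives $P\bigl(\sigma_\alpha(m_n)/m_n^{1/\alpha}\le\lfloor nt\rfloor/m_n^{1/\alpha}\bigr)\to P(\sigma_*(1)\le t\,x^{-1/\alpha})$. By the self-similarity $\sigma_*(x)\overset{d}{=}x^{1/\alpha}\sigma_*(1)$ this equals $P(\sigma_*(x)\le t)$, which, $\sigma_*(x)$ being atomless, coincides with $P(\sigma_*(x)<t)=P(L_*(t)>x)=P(L_*(t)\ge x)$, the last equality because $L_*(t)$ has an absolutely continuous (Mittag--Leffler) law. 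Hence $P(n^{-\alpha}L_\alpha(\lfloor nt\rfloor)\ge x)\to P(L_*(t)\ge x)$ for every $x>0$; the case $x=0$ is trivial, and continuity of $x\mapsto P(L_*(t)\ge x)$ upgrades this to convergence in distribution.

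I expect the only genuine work to be the routine bookkeeping just sketched — pinning down the precise normalisation from the generating function and justifying the limit with the moving threshold $\lfloor nt\rfloor/m_n^{1/\alpha}\to t\,x^{-1/\alpha}$. An equivalent route, more in the spirit of the explicit computations of this section, is transform-analytic: from~\ref{funzione generatrice processo sibuya} a geometric summation gives
\begin{align*}
\sum_{t\ge0}u^{t}\,\mathbb{E}\bigl[e^{-\xi'L_\alpha(t)}\bigr]=(1-u)^{\alpha-1}\Bigl(1-e^{-\xi'}\bigl(1-(1-u)^{\alpha}\bigr)\Bigr)^{-1};
\end{align*}
evaluating at $u=e^{-s/n}$ and $\xi'=\xi n^{-\alpha}$ and multiplying by $1/n$ one finds the limit $s^{\alpha-1}/(s^{\alpha}+\xi)=\int_{0}^{\infty}e^{-st}\,\mathbb{E}[e^{-\xi L_*(t)}]\,dt$, so that a Laplace-inversion argument (the limit function $t\mapsto\mathbb{E}[e^{-\xi L_*(t)}]$ being continuous) yields the pointwise convergence of the Laplace transforms and hence the claim.
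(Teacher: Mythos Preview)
Your argument is correct and complete. The duality step, the Laplace-transform stable limit for $\sigma_\alpha(m)/m^{1/\alpha}$, the moving-threshold lemma (via uniform convergence of distribution functions to a continuous limit), and the passage from $\sigma_*$ to $L_*$ through self-similarity and absolute continuity are all sound.

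However, your route is genuinely different from the paper's. The paper works directly with the explicit mass function \ref{distribuzione processo sibuya}: it writes $P(L_\alpha(\lfloor nt\rfloor)=\lfloor n^\alpha h\rfloor)$ as a finite sum of Gamma ratios, uses the Tricomi asymptotic $\Gamma(z+c)/\Gamma(z+d)\sim z^{c-d}$ to extract the leading behaviour $(nt)^{-\alpha}W_{-\alpha,1-\alpha}(-h/t^\alpha)$, and then recognises $P(a\le n^{-\alpha}L_\alpha(\lfloor nt\rfloor)\le b)$ as a Riemann sum converging to $\int_a^b t^{-\alpha}W_{-\alpha,1-\alpha}(-x/t^\alpha)\,dx$, which is the density of $L_*(t)$. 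Your proof bypasses all explicit density computations by transferring the question to the subordinator side through $\{L_\alpha(s)\ge m\}=\{\sigma_\alpha(m)\le s\}$ and using the closed-form generating function to get the stable limit directly. What the paper's computation buys is an explicit identification of the limiting density with the Wright function and a local (pointwise in $m$) asymptotic; what your approach buys is brevity and robustness---it is essentially the one-dimensional shadow of the $J_1$-convergence argument the paper later invokes in Proposition~\origref{prop convergenze}, and it would extend with minimal change to any increment law in the strict domain of attraction.
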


\begin{proof}
We remind (see e.g. \cite{dovidio}, formula 1.11) that  $L_*(t)$ has a density:
\begin{align*}
P(L_*(t)\in dx)=\frac{1}{t^\alpha}W_{-\alpha,1-\alpha}\big(-x/t^{\alpha}\big)dx,
\end{align*}
where $W_{\eta,\gamma}(z)=\sum_{r=0}^{\infty}\frac{z^r}{r!\Gamma(\eta r+\gamma)}$  is the Wright function. 
Then, for every $a<b$, with $a,b\in \mathbb{ R}^+$, we have to show that
\begin{equation}\label{EscaledDist}
\lim_{n\rightarrow\infty}P\{a\leq n^{-\alpha}L_\alpha (\lfloor nt \rfloor)\leq b\}=\int_a^b \frac{W_{-\alpha,1-\alpha}\big(-x/t^{\alpha}\big)}{t^{\alpha}}dx.
\end{equation} 
By \ref{distribuzione processo sibuya}, we have 
\begin{eqnarray*}
P(L_{\alpha} (\lfloor nt \rfloor)=s)&=&\sum_{r=0}^{s}\binom{s}{r}(-1)^r\binom{\lfloor{nt}\rfloor-\alpha(r+1)}{\lfloor{nt}\rfloor}\\
&=&\sum_{r=0}^{s}\frac{\Gamma(s+1)}{\Gamma(s-r+1)r!}(-1)^r\frac{\Gamma\big(\lfloor{nt}\rfloor-\alpha(r+1)+1\big)}{\Gamma\big(1-\alpha(r+1)\big)\Gamma\big(\lfloor{nt}\rfloor+1\big)}.
\end{eqnarray*}
Taking $s$ of the form $s=\lfloor{n^{\alpha}h\rfloor}$, with $h\in\bbR^+$, and using Tricomi formula
\begin{align}
 \frac{\Gamma(z+c)}{\Gamma(z+d)}=z^{c-d}\big(1+O(1/z)\big) \qquad c,d\in\bbR, \label{Tricomi formula}
\end{align}
 we write
\begin{eqnarray}\label{Ensim}
P(L_{\alpha} (\lfloor nt \rfloor)= \lfloor{n^{\alpha}h\rfloor})&\sim&
\frac{1}{(nt)^{\alpha}}\sum_{r=0}^{\lfloor{n^{\alpha}h\rfloor}}\Big(\frac{-h}{t^{\alpha}}\Big)^r\frac{1}{r!\Gamma(1-\alpha(r+1))}\Big(1+O\Big(\frac{1}{n^{\alpha}}\Big)\Big)\nonumber\\
&\sim&\frac{1}{(nt)^{\alpha}}\sum_{r=0}^{\infty}\Big(\frac{-h}{t^{\alpha}}\Big)^r\frac{1}{r!\Gamma(1-\alpha(r+1))}\nonumber\\
&\sim&\frac{1}{(nt)^{\alpha}}W_{-\alpha,1-\alpha}\big(-s/(nt)^{\alpha}).
\end{eqnarray}
We use a regular partition of an interval $(a,b]$.
Let $x_i=(i+\lfloor{an^{\alpha}}\rfloor)/n^{\alpha}$, $0\leq i\leq \ell$, and $\ell=\max\{i:x_i\leq b\}$. Note that $x_{i+1}-x_{i}=1/n^{\alpha}$, and since $L_{\alpha} (\lfloor nt \rfloor)$ is a discrete random variable,
$$
P\{a n^{\alpha}\leq L_\alpha (\lfloor nt\rfloor) \leq b n^{\alpha}\}=\sum_{s=\lceil{an^{\alpha}}\rceil}^{\lfloor{bn^{\alpha}}\rfloor}P[ L_\alpha (\lfloor nt\rfloor)=s].
$$
Substituting,  in \ref{Ensim}, each term with its asymptotic value, we obtain
\begin{equation}\label{3.12}
P\{a n^{\alpha}\leq  L_{\alpha} (\lfloor nt \rfloor)    \leq b n^{\alpha}\}\sim \frac{1}{t^{\alpha}}\sum_{s=\lceil{an^{\alpha}}\rceil}^{\lfloor{bn^{\alpha}}\rfloor}\frac{1}{n^{\alpha}}W_{-\alpha,1-\alpha}\left(\frac{-s}{(nt)^{\alpha}}\right)\sim \frac{1}{t^{\alpha}}\sum_{s=\lfloor{an^{\alpha}}\rfloor}^{\lfloor{bn^{\alpha}}\rfloor}\frac{1}{n^{\alpha}}W_{-\alpha,1-\alpha}\left(\frac{-s}{(nt)^{\alpha}}\right).
\end{equation}
The correspondence between $i$ and $x_i$ is one to one and the interval $[x_0,x_{\ell +1}]$ contains the given $[a,b]$. In addition observe that $x_0\leq a<x_1<x_2<\dots<x_{\ell}\leq b < x_{\ell +1}$, then \ref{3.12} may be written as follows:
\begin{equation}\label{3.13}
P\{a n^{\alpha}\leq L_{\alpha} (\lfloor nt \rfloor)=  \leq b n^{\alpha}\}\sim \frac{1}{t^{\alpha}}\sum_{i=0}^{\ell-1}(x_{i+1}-x_i)W_{-\alpha,1-\alpha}\left(\frac{-x_i}{t^{\alpha}}\right),
\end{equation}
which is  the Riemann sum converging to the integral on the right-hand side of \ref{EscaledDist}.
\end{proof}

We now compute the  auto-correlation function of the process $\{L_{\alpha}(t)\}_{t\in \mathbb{N}_0}$.

%%%%%%%%%

\begin{prop} \label{covarianza processo sibuya}
The Sibuya counting process $\{L_{\alpha}(t)\}_{t\in \mathbb{N}_0}$ is such that
\begin{align}
\mathbb{E}[L_\alpha (t)]=&\binom{t+\alpha}{t}-1\label{EEt},\\
\mathbb{E}[L_\alpha (t)^2]=& 2\binom{t+2\alpha}{t}-3\binom{t+\alpha}{t}+1 \label{VEt},\\
\mathbb{E} [L_\alpha (t_1)L_\alpha (t_2)]=&\sum_{\ell=1}^{\min(t_1,t_2)}\binom{\ell+\alpha-1}{\ell}\Big[\binom{t_1-\ell+\alpha}{t_1-\ell}+\binom{t_2-\ell+\alpha}{t_2-\ell}-1\Big].  \label{CovEt}
\end{align}
Moreover, as $t_2\to\infty$, 
\begin{align}
Corr[L_\alpha (t_1), L_\alpha (t_2)]=&\frac{Cov[L_\alpha (t_1), L_\alpha (t_2)]}{\sqrt{\bbV[L_\alpha (t_1)]\bbV[L_\alpha (t_2)]}}\sim C\, t_2^{-\alpha},  \label{CorrEt1}
\end{align}
where $C=C(t_1,\alpha)$.
\end{prop}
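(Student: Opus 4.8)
I would establish the three exact identities by generating functions and a renewal decomposition, and then read off the asymptotics from them. Throughout, write $m(t):=\mathbb{E}[L_\alpha(t)]$ and $U(s):=\sum_{k\ge 0}P(\sigma_\alpha(k)=s)$ for the renewal mass function, noting that $\sigma_\alpha(k)\ge k$ forces $L_\alpha(t)\le t$, so every sum below is finite. \emph{For the first two moments}, start from the generating function \ref{funzione generatrice processo sibuya}: for $0<u<1$ the nonnegativity of the terms allows the interchange
\[
\sum_{t\ge 0}u^t m(t)=\sum_{m\ge 0}m\,\mathcal{G}_{L_\alpha}(m,u)=(1-u)^{\alpha-1}\sum_{m\ge 0}m\bigl[1-(1-u)^\alpha\bigr]^m ,
\]
and with $\sum_{m\ge0}mv^m=v/(1-v)^2$ at $v=1-(1-u)^\alpha$ this collapses to $(1-u)^{-\alpha-1}-(1-u)^{-1}$; extracting the coefficient of $u^t$ via the generalized binomial series gives \ref{EEt}. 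The same computation with $\sum_{m\ge0}m^2v^m=v(1+v)/(1-v)^3$ gives $2(1-u)^{-2\alpha-1}-3(1-u)^{-\alpha-1}+(1-u)^{-1}$ and hence \ref{VEt}.

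\emph{For the mixed moment}, take $t_1\le t_2$. Since $\sigma_\alpha$ is strictly increasing, $L_\alpha(t)=\sum_{k\ge 1}\mathbbm{1}\{\sigma_\alpha(k)\le t\}$, so
\[
\mathbb{E}[L_\alpha(t_1)L_\alpha(t_2)]=\sum_{k_1,k_2\ge 1}P\bigl(\sigma_\alpha(k_1)\le t_1,\ \sigma_\alpha(k_2)\le t_2\bigr),
\]
and I split the sum into $k_1=k_2$, $k_1<k_2$, $k_1>k_2$. The diagonal gives $\sum_{k\ge1}P(\sigma_\alpha(k)\le t_1)=m(t_1)$. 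For $k_1<k_2$ I write $k_2=k_1+j$ and condition on $\sigma_\alpha(k_1)$; since the increment $\sigma_\alpha(k_1+j)-\sigma_\alpha(k_1)\overset{d}{=}\sigma_\alpha(j)$ is independent of $\sigma_\alpha(k_1)$, the sum over $k_1,j\ge1$ equals $\sum_{s=1}^{t_1}U(s)\,m(t_2-s)$ — here using that $\sum_{k\ge1}P(\sigma_\alpha(k)=s)=U(s)$ for $s\ge1$ and that $U$ has generating function $(1-\mathbb{E}u^Z)^{-1}=(1-u)^{-\alpha}$ by \ref{funzione generatrice Sibuya}, i.e.\ $U(s)=\binom{s+\alpha-1}{s}$. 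For $k_1>k_2$, the inclusion $\{\sigma_\alpha(k_1)\le t_1\}\subseteq\{\sigma_\alpha(k_2)\le t_1\le t_2\}$ reduces the joint event to $\{\sigma_\alpha(k_1)\le t_1\}$, and the analogous computation yields $\sum_{s=1}^{t_1}U(s)\,m(t_1-s)$. Therefore
\[
\mathbb{E}[L_\alpha(t_1)L_\alpha(t_2)]=m(t_1)+\sum_{s=1}^{t_1}U(s)\bigl[m(t_1-s)+m(t_2-s)\bigr].
\]
The partial-sum identity $\sum_{s=1}^{t_1}U(s)=\binom{t_1+\alpha}{t_1}-1=m(t_1)$ absorbs the stray term, and substituting $m(r)=\binom{r+\alpha}{r}-1$ and $t_1=\min(t_1,t_2)$ gives \ref{CovEt}. (Equivalently, condition on the first renewal time $Z_1$ and solve the resulting discrete renewal equation by convolving its forcing term with $U$.)

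\emph{For the asymptotics}, fix $t_1\ge1$, so $\mathbb{V}[L_\alpha(t_1)]$ is a strictly positive constant. By \ref{VEt}, $\mathbb{V}[L_\alpha(t_2)]=2\binom{t_2+2\alpha}{t_2}-\binom{t_2+\alpha}{t_2}-\binom{t_2+\alpha}{t_2}^2$, and Tricomi's formula \ref{Tricomi formula} gives $\binom{t_2+\alpha}{t_2}\sim t_2^{\alpha}/\Gamma(1+\alpha)$, $\binom{t_2+2\alpha}{t_2}\sim t_2^{2\alpha}/\Gamma(1+2\alpha)$, so
\[
\mathbb{V}[L_\alpha(t_2)]\sim\Bigl(\tfrac{2}{\Gamma(1+2\alpha)}-\tfrac{1}{\Gamma(1+\alpha)^2}\Bigr)t_2^{2\alpha},
\]
the bracketed constant being strictly positive for $\alpha\in(0,1)$ (it equals the variance of the inverse $\alpha$-stable subordinator at time $1$). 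For the covariance, rewrite the mixed moment as $\mathbb{E}[L_\alpha(t_1)L_\alpha(t_2)]=A(t_1,\alpha)+\sum_{s=1}^{t_1}U(s)\,m(t_2-s)$ with $A(t_1,\alpha):=m(t_1)+\sum_{s=1}^{t_1}U(s)m(t_1-s)>0$; using $\sum_{s=1}^{t_1}U(s)=m(t_1)$ and $m(t_2-s)-m(t_2)=\binom{t_2-s+\alpha}{t_2-s}-\binom{t_2+\alpha}{t_2}=O(t_2^{\alpha-1})$ uniformly for $s\le t_1$ (again by \ref{Tricomi formula}), one gets $\sum_{s=1}^{t_1}U(s)m(t_2-s)=m(t_1)m(t_2)+O(t_2^{\alpha-1})$, hence $Cov[L_\alpha(t_1),L_\alpha(t_2)]=\mathbb{E}[L_\alpha(t_1)L_\alpha(t_2)]-m(t_1)m(t_2)\to A(t_1,\alpha)$ as $t_2\to\infty$. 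Dividing by $\sqrt{\mathbb{V}[L_\alpha(t_1)]\,\mathbb{V}[L_\alpha(t_2)]}$ and inserting the variance asymptotics yields \ref{CorrEt1} with $C(t_1,\alpha)=A(t_1,\alpha)/\sqrt{\mathbb{V}[L_\alpha(t_1)]\,(2/\Gamma(1+2\alpha)-1/\Gamma(1+\alpha)^2)}$.

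The main obstacle is the mixed moment: correctly organizing the double sum over renewal indices (or, equivalently, identifying the solution of the discrete renewal equation) and recognizing $\binom{s+\alpha-1}{s}$ as the renewal mass $U(s)$ with generating function $(1-u)^{-\alpha}$. Once that is in place the asymptotics are routine, the sole external input being the classical positivity of $2/\Gamma(1+2\alpha)-1/\Gamma(1+\alpha)^2$ for $\alpha\in(0,1)$.
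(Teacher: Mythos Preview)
Your proof is correct and follows essentially the same strategy as the paper for the mixed moment and the asymptotics: both write $\mathbb{E}[L_\alpha(t_1)L_\alpha(t_2)]=\sum_{x,y\ge 1}P(\sigma_\alpha(x)\le t_1,\sigma_\alpha(y)\le t_2)$, split according to the ordering of the indices, exploit stationarity and independence of the increments of $\sigma_\alpha$, and then use Tricomi's formula \ref{Tricomi formula} to show that the covariance tends to a constant while the variance grows like $t_2^{2\alpha}$.

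The one genuine difference is in how you obtain \ref{EEt} and \ref{VEt}. The paper gets \ref{EEt} from the tail-sum formula $\mathbb{E}L_\alpha(t)=\sum_{x\ge1}P(\sigma_\alpha(x)\le t)$ combined with the potential-measure identity \ref{misura potenziale}, and then derives \ref{VEt} \emph{a posteriori} by specializing the mixed-moment formula to $t_1=t_2$ and using the Vandermonde-type identity $\sum_{j=0}^k\binom{\alpha}{j}\binom{\beta}{k-j}=\binom{\alpha+\beta}{k}$. You instead read both moments directly from the generating function \ref{funzione generatrice processo sibuya} via $\sum_{m}m\,v^m$ and $\sum_m m^2 v^m$, which is shorter and avoids the binomial convolution step. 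Your route also makes the variance constant $2/\Gamma(1+2\alpha)-1/\Gamma(1+\alpha)^2$ explicit (the paper leaves it as an unspecified $C_2(\alpha)$), and you correctly note that the positivity of this constant and of $A(t_1,\alpha)$ is needed for \ref{CorrEt1} to be a genuine asymptotic equivalence rather than just a bound.
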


\begin{proof}
In the following, we will use that
\begin{align}
(-1)^t \binom{-a}{t} =  \binom{a +t-1}{t}, \qquad a>0, \label{first} 
\end{align}
and
\begin{align}
\sum _{\tau =0}^t (-1)^\tau \binom{-a}{\tau}= (-1)^t \binom{-a -1}{t}= \binom{a +t}{t}. \label{second}
\end{align}
Note that \ref{first} can be easily proved by expanding the binomial coefficients, while \ref{second} can be  checked by computing the generating function of both members.
We now need to derive the mean time spent by the Sibuya random walk \ref{sub stabile discreto} at the location $t$:
\begin{align}
\sum _{x=0}^\infty P(\sigma_\alpha (x)=t)=(-1)^t \binom{-\alpha}{t} = \binom{\alpha +t-1}{t} \label{misura potenziale}.
\end{align}
Formula \ref{misura potenziale} can be easily proved by computing (using \ref{funzione generatrice Sibuya}) the generating function of the left-hand side 
\begin{align*}
\sum _{x=0}^\infty \sum _{t=0}^\infty u^t P(\sigma_\alpha (x)=t)= \sum _{x=0}^\infty \mathbb{E}u^{\sigma_\alpha (x)} = \mathbb{E}u^{\sigma_\alpha (0)}+ \sum _{x=1}^\infty (\mathbb{E}u^Z)^x =(1-u)^{-\alpha},
\end{align*}
which is also the generating function of $(-1)^t \binom{-\alpha}{t}$.
 By using \ref{first}, \ref{second} and \ref{misura potenziale} we obtain
\begin{align*}
\mathbb{E} L_\alpha (t)& = \sum _{x=1}^\infty P(L_\alpha (t)\geq x)= \sum _{x=1}^\infty P(\sigma_\alpha (x)\leq t)= \sum _{\tau =0}^t \sum _{x=1}^\infty P(\sigma_\alpha (x)=\tau) \\&= \sum _{\tau =0}^t \biggl ( (-1)^\tau \binom{-\alpha}{\tau} -\delta _0(\tau) \biggr )= \binom{t+\alpha}{t}-1,
\end{align*}
and, by means of \ref{Tricomi formula}, we have
\begin{align}
\mathbb{E} L_\alpha (t)\sim C_3t^{\alpha } \qquad \textrm{as}\,\,\,\, t\to \infty, \label{asintotica valore atteso}
\end{align}
where $C_3=C_3(\alpha)$. For $t_1 \leq t_2$ we have 
\begin{align*}
\mathbb{E} [L_\alpha (t_1)L_\alpha (t_2)]& = \sum _{x=1}^\infty \sum _{y=1}^\infty P(L_\alpha(t_1)\geq x,\, L_\alpha (t_2) \geq y)\\
&= \sum _{x=1}^\infty \sum _{y=1}^\infty P(\sigma_\alpha (x) \leq t_1,\, \sigma_\alpha (y)\leq t_2)\\
& = \sum _{\tau _1=0}^{t_1} \sum _{\tau _2=0}^{t_2}  \sum _{x=1}^\infty \sum _{y=1}^\infty P(\sigma_\alpha (x) =\tau_1,\, \sigma_\alpha (y)= \tau_2)\\ 
&=  \sum _{\tau _1=0}^{t_1} \sum _{\tau _2=\tau _1}^{t_2} \sum _{x=1}^\infty \sum _{y=x}^\infty P(\sigma_\alpha (x)=\tau _1) P(\sigma_\alpha (y-x)= \tau _2-\tau _1)\\
 & \,\,\, +\sum _{\tau _2=0}^{t_1-1} \sum _{\tau _1= \tau _2+1}^{t_1}\sum _{y=1}^\infty \sum _{x=y+1}^\infty P(\sigma_\alpha (y)=\tau _2) P(\sigma_\alpha (x-y)= \tau _1-\tau _2)
 \end{align*}
 where in the last step we used that $\sigma_\alpha$ has independent and stationary increments. By using \ref{first}, \ref{second}, \ref{misura potenziale}, we have
 \begin{align*}
 \mathbb{E} [L_\alpha (t_1)L_\alpha (t_2)]& =
 \sum _{\tau _1=0}^{t_1} \sum _{\tau _2=\tau _1}^{t_2} \binom{\alpha +\tau_2-\tau _1-1}{\tau _2-\tau _1} \biggl ( \binom{\alpha +\tau _1-1}{\tau _1}-\delta _0(\tau _1) \biggr )\\
& \,\,\,\,\,+ \sum _{\tau _2=0}^{t_1-1} \sum _{\tau _1= \tau _2+1}^{t_1} \binom{\alpha +\tau _1-\tau _2-1}{\tau _1-\tau _2}\biggl ( \binom{\alpha + \tau _2-1}{\tau _2}-\delta _0 (\tau _2) \biggr ) \\
&= \sum _{\tau _1=1}^{t_1} \binom{\alpha +t_2-\tau _1}{t_2-\tau _1} \binom{\alpha +\tau _1-1}{\tau _1}+  \sum _{\tau _2=1}^{t_1-1}  \biggr [ \binom{\alpha +t_1-\tau _2}{t_1-\tau _2}-1\biggl ] \binom{\alpha +\tau _2-1}{\tau _2}\\
&=\sum_{\ell=1}^{t_1}\binom{\ell+\alpha-1}{\ell}\Big[\binom{t_1-\ell+\alpha}{t_1-\ell}+\binom{t_2-\ell+\alpha}{t_2-\ell}-1\Big]
\end{align*}
and this proves \ref{CovEt}.
In the end, \ref{VEt} can be obtained by putting $t_1=t_2$ and doing straightforward calculations (to this scope it is useful to recall formula  1.53 in \cite{samko}, which states  that $\sum _{j=0}^k\binom{\alpha}{j}\binom{\beta }{k-j} = \binom{\alpha +\beta }{k}$).
 By assuming $t_1\leq t_2$, we now investigate the asymptotic behaviour of the correlation function for $t_2 \to \infty$ and $t_1$ fixed. First, by using \ref{first} and \ref{second}, we have  that
\begin{align}
& cov \bigl ( L_\alpha (t_1), L_\alpha (t_2) \bigr )=\notag\\ 
&= \sum_{\ell=1}^{t_1}\binom{\ell+\alpha-1}{\ell}\Big[\binom{t_1-\ell+\alpha}{t_1-\ell}+\binom{t_2-\ell+\alpha}{t_2-\ell}-1\Big] -\biggl (\binom{t_1+\alpha}{t_1}-1\biggr ) \biggl (\binom{t_2+\alpha}{t_2}-1\biggr)\notag\\
&=\sum_{\ell=1}^{t_1}(-1)^\ell \binom{-\alpha}{\ell}\Big[\binom{t_1-\ell+\alpha}{t_1-\ell}+\binom{t_2-\ell+\alpha}{t_2-\ell}-1\Big]-\sum _{\ell=1}^{t_1} (-1)^\ell \binom{-\alpha}{\ell} \biggl (\binom{t_2+\alpha}{t_2}-1\biggr)\notag\\
&=\sum_{\ell=1}^{t_1}(-1)^\ell \binom{-\alpha}{\ell}\Big[\binom{t_1-\ell+\alpha}{t_1-\ell}+\binom{t_2-\ell+\alpha}{t_2-\ell}-\binom{t_2+\alpha}{t_2}\Big]\notag\\
&= \sum_{\ell=1}^{t_1}(-1)^\ell \binom{-\alpha}{\ell}\Big[\binom{t_1-\ell+\alpha}{t_1-\ell}+\frac{\Gamma (t_2-\ell+\alpha+1)}{\Gamma (t_2-\ell+1)\Gamma (\alpha+1)}-\frac{\Gamma (t_2+\alpha+1)}{\Gamma (t_2+1)\Gamma (\alpha +1)}\Big].\notag
\end{align}
By \ref{Tricomi formula} we have
\begin{align}
cov \bigl ( L_\alpha (t_1), L_\alpha (t_2) \bigr ) \sim C_1\qquad \textrm{as} \,\,\,\,\,\,t_2\to \infty, \qquad   \textrm{where $\, C_1=C_1(t_1,\alpha)$}.   \label{asintotica covarianza}
\end{align}
 We further have
\begin{align}
Var \bigl (L_\alpha (t_2)\bigr)= 2 \binom{t_2+2\alpha}{t_2}-\binom{t_2+\alpha}{t_2}^2-\binom{t_2+\alpha}{t_2}\sim C_2(\alpha)t_2^{2\alpha}\qquad \textrm{as}\,\,\,  t_2\to \infty, \label{asintotica varianza}
\end{align}
where in the last step we expanded the binomial coefficients and used again \ref{Tricomi formula}. Putting all together, we have
\begin{align*}
corr \bigl (L_\alpha (t_1), L_\alpha (t_2)\bigr)= \frac{cov \bigl (L_\alpha (t_1), L_\alpha (t_2)\bigr) }{\sqrt{Var (L_\alpha(t_1))}\sqrt{Var (L_\alpha (t_2))}} \sim C\, t_2^{-\alpha} \qquad t_2\to \infty.
\end{align*}
where $C=C(t_1,\alpha)$.

\end{proof}

\section{Discrete-time semi-Markov chains}\label{main}

Consider a discrete-time homogeneous Markov chain $\{\mathcal{X}(t)\}_{t \in \mathbb{N}_0}$ on the discrete space $\mathcal{S}$ with transition matrix
\begin{align*}
A_{ij}= P(\mathcal{X}(t+1)=j|\mathcal{X}(t)=i), \qquad i,j\in \mathcal{S}, \qquad \forall t \in \mathbb{N}_0.
\end{align*}
The set of functions
\begin{align*}
P_{ij}(t)= P(\mathcal{X}(t)=j|\mathcal{X}(0)=i) 
\end{align*}
satisfy the backward equation
\begin{align}
P_{ij}(t)= \sum _{k \in \mathcal{S}} A_{ik}P_{kj}(t-1) \label{equazione backward catene}
\end{align}
by virtue of the Chapman-Kolmogorov equality. If the process is in the state $i\in \mathcal{S}$ at time $t$, then at time $t+1$ it remains in the same state with probability $q_i= A_{ii}$, while it makes a jump to a different state with probability $p_{i}=1-A_{ii}$. Thus, if $A_{ii}\in (0,1)$, the waiting time in the state $i$ has geometric distribution of parameter $p_i$, if $A_{ii}=0$ then the waiting time is equal to 1 almost surely (which can be considered a degenerate geometric law), while $A_{ii}=1$ implies that $i$ is an absorbing state and the waiting time is infinity. Under the assumption $A_{ii}< 1$ for each $i\in \mathcal{S}$, equation \ref{equazione backward catene} can be re-written as
\begin{align}
P_{ij}(t)= q_i\, P_{ij}(t-1)+ p_i\, \sum _{l\in S}  H_{il}\, P_{lj}(t-1) \label{equazione Markov 1}
\end{align}
where $H_{ij}$ is the probability of a jump from $i$ to $j$ conditioned to the fact that $i\neq j$. The matrix
\begin{align*}
H_{ij}= \begin{cases}
0 \qquad &j= i \\
\frac{A_{ij}}{1-A_{ii}} \qquad &j \neq i
\end{cases}
\end{align*}
 defines a new Markov chain $\{X_n\}_{n\in \mathbb{N}_0}$ in $\mathcal{S}$:
 \begin{align}
H_{ij}=P(X_{n+1}=j|X_n=i). \label{mm}
\end{align}
By construction, all the waiting times of $\{X_n\}_{n\in \mathbb{N}_0}$ are equal to $1$ almost surely.
Now, the original Markov chain $\{\mathcal{X}(t)\}_{t\in \mathbb{N}_0}$ can also be re-written in the alternative form of a discrete-time jump process having geometric (possibly degenerate) waiting times $M_k$
\begin{align} \label{seconda definizione caso Markov}
\mathcal{X}(t)=X_n \qquad     V_n \leq t<V_{n+1} \qquad \textrm{with}\, \,  V_0=0 ,\, \quad V_n= \sum _{k=0}^{n-1}M_k
\end{align}
with
\begin{align*}
P(M_k=r| X_k=i)= q_i^{r-1}p_i \qquad p_i \in (0,1] \qquad r=1,2,\dots \qquad k=0,1,2\dots
\end{align*}
In some sense, Markovianity of $\{\mathcal{X}(t)\}_{t\in \mathbb{N}_0}$ is a consequence of both markovianity of $\{X_n\}_{t\in \mathbb{N}_0}$ and the lack of memory  of the geometric distribution.

\vspace{0.1cm}

From now on, we will consider discrete-time homogeneous semi-Markov chains, i.e. processes  constructed in the same way of \ref{seconda definizione caso Markov} except for the law of the waiting times, which are now no-longer geometrically distributed:
\begin{align}
\mathcal{Y}(t) =X_n \qquad     T_n \leq t<T_{n+1} \qquad \textrm{where}\quad T_0=0 ,\quad T_n= \sum _{k=0}^{n-1}J_k \label{definizione semi-Markov discreto}
\end{align}
where $H_{ij}$ is the same matrix defined in \ref{mm}, while the waiting times are such that
\begin{align*}
P(J_k=r|X_k=i)= f(r,i)
\end{align*}
for an arbitrary discrete density $f(r,i)$ on $r=1,2,\dots$.       Note that the conditional distribution of the waiting time $J_k$ is time-homogeneous, i.e. depends only on the position of the process and not on the number $k$.   

We are actually interested in two particular subclasses of  \ref{definizione semi-Markov discreto}. Such  subclasses consist  of semi-Markov chains that are constructed as particular modifications of the Markov chain \ref{seconda definizione caso Markov}, in the sense that the new waiting times $J_k$ are given by particular functions of the geometrically distributed waiting times $M_k$.   In the following definition we propose these two models.

\begin{defin} \label{defin semimarkov } Let $\{\mathcal{X}(t)\}_{t\in \mathbb{N}_0}$ be a Markov chain of type  \ref{seconda definizione caso Markov}, having waiting times     $M_k$ with geometric law $P(M_k=r| X_k=i)= p_i q_i^{r-1}, r=1,2,...$, $p_i\in (0,1]$. Let  $\{ \sigma _d (t) \}_{t\in \mathbb{N}_0}$ be  a random walk of type \ref{subordinatore discreto}, independent of $\{\mathcal{X}(t)\}_{t\in \mathbb{N}_0}$,  whose jumps $Z_1, Z_2, \dots$ are i.i.d. copies of a positive integer-valued random variable $Z$.

i) We say that \ref{definizione semi-Markov discreto} is a semi-Markov chain of type A if the waiting times $J_k$ have the compound geometric form
\begin{align}
J_k = \sigma _d (M_k)= \sum _{i=1}^{M_k} Z_i \label{intertempi semi-Markov nel primo caso}
\end{align}
and thus having  generating function
\begin{align}
\mathbb{E} (u^{J_k} | X_k=i)  = \frac{p_i\mathbb{E}u^Z}{1-q_i\mathbb{E}u^Z}.
\end{align}

ii) We say that \ref{definizione semi-Markov discreto} is a semi-Markov chain of type B if the waiting times $J_k$ have the compound shifted geometric form
\begin{align}
J_k= 1+ \sigma _d (M_k-1)=1+\sum _{i=1}^{M_k-1} Z_i \label{bla bla}
\end{align}
and thus having generating function
\begin{align}
\mathbb{E} (u^{J_k} | X_k=i)  = \frac{p_iu}{1-q_i\mathbb{E}u^Z}.\label{bla bla bla bla}
\end{align}
\end{defin}

Observe that if $p_i  \in (0,1)$, then the $J_k$ follow a standard compound geometric law, while in the degenerate case $p_i=1$ we have $M_k=1$  and $J_k=Z_k$ almost surely. Note that, for both semi-Markov chains of type A and B, the waiting times  are delayed with respect to those of the original Markov ones, i.e. $J_k\geq M_k$ almost surely, being the $Z_j$ strictly positive. 

In the special case in which $Z \sim Sibuya (\alpha)$, the random variables $J_k$ follow the discrete Mittag--Leffler distributions of type $A$ and $B$, which will be respectively defined in \ref{trasformata DMLa} and \ref{trasformata DMLb}.

\begin{os} The reason why we are interested in waiting times having compound geometric type distributions  is due to the analogy with the related continuous time  processes treated in the literature. Indeed, as said in Section 2.2, we focused on a subclass of continuous time semi-Markov processes with waiting times following  a compound exponential distribution $\sigma (E_k)$, where   $E_k$ is the exponential waiting time of the original Markov process and $\sigma$ is an independent subordinator (for example, in the case of fractional processes, $\sigma$ is a stable subordinator and $\sigma (E_k)$ follows the Mittag--Leffler distribution). In discrete time, the subordinator $\sigma$ is replaced by the increasing random walk $\sigma _d$ (see \ref{subordinatore discreto}) and the exponential distribution is replaced by the (possibly degenerate) geometric one. With such a choice of compound geometric  waiting times, we will prove that these discrete-time semi-Markov chains  retain important features of the related continuous time processes (and converge to them under  suitable scaling limits). In particular, our type A chains exhibits the property of time-change construction, while type $B$ chains are governed by convolution  equations of generalized fractional type.
\end{os}
 
\subsection{Semi-Markov chains of type A: the time change construction}

\begin{te} \label{teorema sul time change}  Let  $\{ \mathcal{X}(t) \}_{t\in \mathbb{N}_0}$ be a Markov chain of type \ref{seconda definizione caso Markov} having no absorbing states and let $\{\sigma _d(t)\}_{t\in \mathbb{N}_0}$ be an independent random walk of type \ref{subordinatore discreto}, whose inverse is $\{L_d (t)\}_{t\in \mathbb{N}_0}$ defined in \ref{inverso subordinatore discreto}.
 Then the time changed process $\{\mathcal{Y}(t)\}_{t\in \mathbb{N}_0} =\{ \mathcal{X}(L_d(t))\} _{t\in \mathbb{N}_0}$     is a  semi-Markov chain of type A  (according to Definition \ref{defin semimarkov }).
\end{te}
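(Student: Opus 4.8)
The plan is to reproduce, in the discrete-time setting, the heuristic time-change argument given for the continuous-time case in Section \ref{rev}, replacing the use of left limits $\sigma(V_n^-)$ by a direct manipulation of the $\max$ in \ref{inverso subordinatore discreto}. First I would record the elementary consequences of the hypotheses: since $\{\mathcal{X}(t)\}_{t\in\mathbb{N}_0}$ has no absorbing state, every $p_i>0$, so in the representation \ref{seconda definizione caso Markov} the geometric waiting times $M_k$ are a.s.\ finite, and hence the instants $V_n=\sum_{k=0}^{n-1}M_k$ are a.s.\ finite and strictly increasing; moreover the jumps $Z_j$ are integers $\ge 1$, so $\{\sigma_d(n)\}_{n\in\mathbb{N}_0}$ is strictly increasing and $L_d$ increases by at most one at each time step.

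The crux is the event identity
\begin{align*}
\{V_n\le L_d(t)<V_{n+1}\}=\{\sigma_d(V_n)\le t<\sigma_d(V_{n+1})\},\qquad t\in\mathbb{N}_0,\ n\in\mathbb{N}_0,
\end{align*}
which follows from \ref{inverso subordinatore discreto} and strict monotonicity of $\sigma_d$, because $L_d(t)\ge m\iff\sigma_d(m)\le t$ for every $m\in\mathbb{N}_0$, and complementing $\{L_d(t)\ge V_{n+1}\}$ produces the strict inequality $t<\sigma_d(V_{n+1})$. Substituting $L_d(t)$ for $t$ in \ref{seconda definizione caso Markov} and using this identity gives $\mathcal{X}(L_d(t))=X_n$ exactly on $\{\sigma_d(V_n)\le t<\sigma_d(V_{n+1})\}$. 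Setting $T_n:=\sigma_d(V_n)$, which is a.s.\ finite, satisfies $T_0=0$, and is strictly increasing in $n$ (as $V_n<V_{n+1}$ and $\sigma_d$ is strictly increasing), we obtain precisely the form \ref{definizione semi-Markov discreto} for $\mathcal{Y}(t)=\mathcal{X}(L_d(t))$, with embedded chain $\{X_n\}_{n\in\mathbb{N}_0}$ governed by the matrix $H$ of \ref{mm}, with $T_n=\sum_{k=0}^{n-1}J_k$, and with waiting times $J_k=T_{k+1}-T_k=\sigma_d(V_{k+1})-\sigma_d(V_k)=\sum_{j=V_k+1}^{V_{k+1}}Z_j$.

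It then remains to identify the joint law of the $J_k$. Conditioning on the whole trajectory $(X_0,M_0,X_1,M_1,\dots)$ of $\{\mathcal{X}(t)\}_{t\in\mathbb{N}_0}$ — which is independent of $\{\sigma_d(t)\}_{t\in\mathbb{N}_0}$ — the instants $V_k$ become deterministic, so each $J_k$ is a sum of $M_k$ consecutive terms of the i.i.d.\ sequence $(Z_j)$, the index blocks for distinct $k$ being disjoint; hence, conditionally on the trajectory, the $J_k$ are independent and $J_k$ depends on the trajectory only through $M_k$, with $\mathbb{E}(u^{J_k}\mid\text{trajectory})=(\mathbb{E}u^Z)^{M_k}$. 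By the tower property and the geometric probability generating function,
\begin{align*}
\mathbb{E}\bigl(u^{J_k}\mid X_k=i\bigr)=\mathbb{E}\bigl((\mathbb{E}u^{Z})^{M_k}\mid X_k=i\bigr)=\frac{p_i\,\mathbb{E}u^{Z}}{1-q_i\,\mathbb{E}u^{Z}},
\end{align*}
so that, conditionally on $X_k=i$, $J_k\overset{d}{=}\sigma_d(M_k)$ with $M_k\sim\mathrm{Geom}(p_i)$ independent of $\sigma_d$; this is exactly the structure \ref{intertempi semi-Markov nel primo caso}, and the conditional independence established above yields the Markov-renewal property, so $\{\mathcal{Y}(t)\}_{t\in\mathbb{N}_0}$ is a semi-Markov chain of type A.

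I expect the main obstacle to be the careful bookkeeping in the displayed event identity: in continuous time one writes $\sigma(V_n^-)\le t<\sigma(V_{n+1})$ with left limits, whereas here one must argue directly with the $\max$ in \ref{inverso subordinatore discreto} and track precisely which inequalities are strict, relying on the fact that the $Z_j$ are strictly positive integers (the same fact that forces $L_d$ to increase by at most one per step, ensuring that $\mathcal{Y}$ genuinely visits every state $X_n$ and that $T_n<T_{n+1}$). A secondary point requiring care is to phrase the conditioning argument so that it delivers simultaneously the conditional generating function of $J_k$ \emph{and} the conditional independence of the family $\{J_k\}$, not merely the one-dimensional marginals.
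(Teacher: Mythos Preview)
Your proposal is correct and follows essentially the same approach as the paper: translate the condition $V_n\le L_d(t)<V_{n+1}$ into $\sigma_d(V_n)\le t<\sigma_d(V_{n+1})$ via the inverse relation, then use the i.i.d.\ structure of the $Z_j$ to identify the law of $J_k=\sigma_d(V_{k+1})-\sigma_d(V_k)$. Your write-up is in fact more careful than the paper's (which simply invokes ``$\sigma_d$ has independent and stationary increments'' for the distributional identity $J_n\overset{d}{=}\sigma_d(M_n)$ and does not spell out either the event identity or the conditional independence of the family $\{J_k\}$).
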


\begin{proof} Since the jumps of $L_d$ are at most of size $1$, both  the jumps of $\mathcal{X}$ and $\mathcal{Y}$ are described by the chain $X_n$. By using definition \ref{seconda definizione caso Markov} we have
\begin{align} 
\mathcal{X}(L_d(t))=X_n \qquad     V_n \leq L_d(t)<V_{n+1} \qquad \textrm{with}\, \,  V_0=0 ,\, \quad V_n= \sum _{k=0}^{n-1}M_k
\end{align}
where each $M_k$ is finite since there are no absorbing  states. Definition \ref{inverso subordinatore discreto} implies that
\begin{align*}
\mathcal{X}(L_d(t))=X_n \qquad     \sigma _d (V_n) \leq t < \sigma _d (V_{n+1}).
\end{align*}
Thus the waiting times are given by
\begin{align*}
J_n= \sigma _d (V_{n+1}) - \sigma _d (V_{n})\overset{d}{=} \sigma _d (V_{n+1}-V_n)=\sigma _d (M_n)= \sum _{i=1}^{M_n} Z_i
\end{align*}
as $\sigma _d $ has independent and stationary increments.

\end{proof}

\subsubsection{Time-changing Markov chains with i.i.d. jumps}  \label{Time-changing Markov chains with i.i.d. jumps}
We now gain more insights on a particular subclass of semi-Markov chains of type $A$.
We  consider Markov chains of type \ref{seconda definizione caso Markov} with values in a discrete state space $\mathcal{S}\subseteq \mathbb{R}$, having independent and stationary increments, i.e. the transition matrix elements $A_{ij}$ only depend on the jump $j-i$.  Such processes can be equivalently written as the  random walk

\begin{align}
X(t)= \sum _{j=1}^t X_j \qquad t\in \mathbb{N} \qquad X(0)=0, \label{random walk}
\end{align}
where $X_1, X_2,...$ are i.i.d. random variables. 
Let $\sigma _d$ be an increasing random walk of type \ref{subordinatore discreto}, independent of  \ref{random walk}, with inverse $L_d$  defined in \ref{inverso subordinatore discreto}.
We are interested in the time-changed process
\begin{align}
Y(t)= X(L_d(t))= \sum _{j=1}^{L_d(t)} X_j \qquad t\in \mathbb{N}, \qquad Y(0)=0. \label{random walk subordinato}
\end{align}
By Theorem \ref{teorema sul time change}, we have that \ref{random walk subordinato} is a semi-Markov chain of type $A$.

The time-change $\ref{random walk subordinato}$ introduces a memory tail effect, which is evident by investigating the  behavior of the auto-correlation function
\begin{align*}
\rho (s,t)=  \frac{cov (Y(t); Y(s))}{\sqrt{Var Y(t)}\sqrt{Var Y(s)}} \qquad s\leq t \quad s\in \mathbb{N} \quad t\in \mathbb{N}.
\end{align*}
A remarkable example of this fact is analyzed in the following proposition, where we prove that in the case where $L_d$ is the Sibuya counting process (defined in \ref{inverso sub stabile discreto}),  then, for fixed $s$ and large $t$, the autocorrelation function of  $Y_\alpha (t)= X(L_\alpha (t))$ exhibits a different decay with respect to the $t^{-\frac{1}{2}}$ decay characterizing the original Markov chain $X$.
 This seems to be useful in many applications, such as the problem of modeling  memory effects in evolving graphs (see \cite{pachon1}, \cite{pachon2}). The following Proposition is the discrete-time counterpart of the analogous result holding in continuous time,  when considering L\'evy processes  time-changed by inverse $\alpha$-stable subordinators (see \cite{correlation} for the computation of the auto-correlation function). This is  consistent with the fact that the Sibuya counting process is just a discrete time approximation of the inverse stable subordinator.

\begin{prop} Let  $\{ X(t) \}_{t\in \mathbb{N}_0}$ be a process of type \ref{random walk}, such that $X_1$ has finite mean and variance, and let $\{ L_d(t) \}_{t\in \mathbb{N}_0}$  be a counting process of type \ref{inverso subordinatore discreto}, independent of $\{ X(t) \}_{t\in \mathbb{N}_0}$. Let $\{ Y(t) \}_{t\in \mathbb{N}_0}$ be the process defined in \ref{random walk subordinato}. Then

a) $\{ Y(t) \}_{t\in \mathbb{N}_0}$   has auto-correlation function
\begin{align*}
\rho (s,t)= \frac{cov (L_d (t); L_d (s))(\mathbb{E}X_1)^2+\mathbb{E}L _d(s)Var X_1}{ \sqrt{Var [L_d (t)](\mathbb{E}X_1)^2+\mathbb{E}L _d(t)Var X_1} \sqrt{Var [L_d (s)](\mathbb{E}X_1)^2+\mathbb{E}L_d (s)Var X_1}} \qquad s \leq t.
\end{align*}

b) Consider the  Sibuya counting process $\{L_\alpha (t)\} _{t\in \mathbb{N}_0}$. Then, for fixed $s$ and large $t$, $\{Y_\alpha (t)\}_{t\in \mathbb{N}_0}= \{X(L_\alpha (t))\}_{t\in \mathbb{N}_0}$ has auto-correlation function
\begin{align*}
\rho_\alpha (s,t)\sim \frac{k_1}{t^\alpha}  \qquad \textrm{if}\qquad \mathbb{E}X_1 \neq 0
\end{align*}
 and
\begin{align*}
\rho_\alpha (s,t)\sim \frac{k_2}{t^{\alpha /2}}  \qquad  \textrm{if}\qquad \mathbb{E}X_1 = 0,
\end{align*}
where $k_1=k_1(s)$ and $k_2=k_2(s)$.
\end{prop}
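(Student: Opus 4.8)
The plan is to compute the covariance and variance of $Y(t) = X(L_d(t))$ by conditioning on the path of the counting process $L_d$, exploiting the independence of $X$ and $L_d$, and then to read off the asymptotics from the moment formulas for the Sibuya counting process established in Proposition \ref{covarianza processo sibuya}. For part a), first I would write, for $s\le t$, $\mathbb{E}[Y(t)\mid L_d] = L_d(t)\,\mathbb{E}X_1$ and $\mathbb{E}[Y(s)Y(t)\mid L_d] = \mathbb{E}X_1^2\,\mathbb{E}[\,(\sum_{j=1}^{L_d(s)}X_j)(\sum_{j=1}^{L_d(t)}X_j)\mid L_d\,]$; since $s\le t$ forces $L_d(s)\le L_d(t)$ almost surely, the inner double sum splits into $L_d(s)$ diagonal terms contributing $\mathbb{E}X_1^2$ each and off-diagonal terms contributing $(\mathbb{E}X_1)^2$ each, giving $\mathbb{E}[Y(s)Y(t)\mid L_d] = L_d(s)\,\mathrm{Var}\,X_1 + L_d(s)L_d(t)(\mathbb{E}X_1)^2$. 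Taking expectations and subtracting $\mathbb{E}Y(s)\,\mathbb{E}Y(t) = \mathbb{E}L_d(s)\,\mathbb{E}L_d(t)(\mathbb{E}X_1)^2$ yields
\begin{align*}
\mathrm{cov}(Y(s),Y(t)) = \mathrm{cov}(L_d(s),L_d(t))(\mathbb{E}X_1)^2 + \mathbb{E}L_d(s)\,\mathrm{Var}\,X_1.
\end{align*}
Setting $s=t$ gives $\mathrm{Var}\,Y(t) = \mathrm{Var}(L_d(t))(\mathbb{E}X_1)^2 + \mathbb{E}L_d(t)\,\mathrm{Var}\,X_1$, and dividing by $\sqrt{\mathrm{Var}\,Y(s)\,\mathrm{Var}\,Y(t)}$ gives the stated formula for $\rho(s,t)$.

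For part b), I would specialize $L_d = L_\alpha$ and plug in the asymptotics already derived inside the proof of Proposition \ref{covarianza processo sibuya}: $\mathbb{E}L_\alpha(t)\sim C_3 t^\alpha$ (see \ref{asintotica valore atteso}), $\mathrm{cov}(L_\alpha(s),L_\alpha(t))\sim C_1$ as $t\to\infty$ with $s$ fixed (see \ref{asintotica covarianza}), and $\mathrm{Var}(L_\alpha(t))\sim C_2 t^{2\alpha}$ (see \ref{asintotica varianza}). When $\mathbb{E}X_1\ne 0$, the dominant term in the numerator is $\mathbb{E}L_\alpha(s)\,\mathrm{Var}\,X_1$, which is a constant in $t$ (the covariance term also being $O(1)$), while the $t$-dependent factor in the denominator is $\sqrt{\mathrm{Var}\,Y(t)}\sim |\mathbb{E}X_1|\sqrt{C_2}\,t^\alpha$; hence $\rho_\alpha(s,t)\sim k_1(s)\,t^{-\alpha}$. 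When $\mathbb{E}X_1=0$, the $(\mathbb{E}X_1)^2$-terms vanish identically, so the numerator is exactly $\mathbb{E}L_\alpha(s)\,\mathrm{Var}\,X_1$ (constant in $t$) and $\mathrm{Var}\,Y(t) = \mathbb{E}L_\alpha(t)\,\mathrm{Var}\,X_1\sim C_3\,\mathrm{Var}\,X_1\,t^\alpha$, so the $t$-dependent denominator factor is $\sqrt{C_3\,\mathrm{Var}\,X_1}\,t^{\alpha/2}$, giving $\rho_\alpha(s,t)\sim k_2(s)\,t^{-\alpha/2}$.

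The only mild subtlety — and what I would be most careful about — is the conditioning step: I must use that $X$ and $L_d$ are independent so that, conditionally on the entire trajectory of $L_d$, the partial sums $\sum_{j=1}^{L_d(s)}X_j$ and $\sum_{j=1}^{L_d(t)}X_j$ are built from the same i.i.d. sequence $X_1,X_2,\dots$ with deterministic (conditionally) upper limits satisfying $L_d(s)\le L_d(t)$; this is what makes the diagonal/off-diagonal bookkeeping valid and produces exactly $L_d(s)$ variance terms. Everything else is a direct substitution of previously established asymptotics, so there is no real obstacle beyond keeping track of which terms are constant in $t$ and which grow.
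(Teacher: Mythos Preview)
Your proposal is correct and follows essentially the same route as the paper: condition on $L_d$ (the paper sums over the joint law of $(L_d(s),L_d(t))$ after first computing $\mathbb{E}[X(t)X(s)]$ via the increment decomposition $X(t)=(X(t)-X(s))+X(s)$, which is equivalent to your diagonal/off-diagonal bookkeeping), obtain $\mathrm{cov}(Y(s),Y(t)) = \mathrm{cov}(L_d(s),L_d(t))(\mathbb{E}X_1)^2 + \mathbb{E}L_d(s)\,\mathrm{Var}\,X_1$, and then plug in the Sibuya asymptotics already established. One small slip to fix in your write-up: the line $\mathbb{E}[Y(s)Y(t)\mid L_d] = \mathbb{E}X_1^2\,\mathbb{E}[\cdots]$ carries a spurious $\mathbb{E}X_1^2$ prefactor, but the formula you derive on the next line is correct.
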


\begin{proof}
a)
First observe that
\begin{align*}
\mathbb{E}[X(t)X(s)]&= \mathbb{E}[(X(t)-X(s))X(s)]+\mathbb{E}[X(s)^2]\\
&= \mathbb{E}[X(t)-X(s)] \mathbb{E} [X(s)]+ Var [X(s)]+(\mathbb{E}X(s))^2\\
&= (t-s)s (\mathbb{E}X_1)^2+sVar X_1+s^2 (\mathbb{E}X_1)^2\\
&=  ts (\mathbb{E}X_1)^2+sVar X_1
\end{align*}
where we have used independence and stationarity of the increments and the fact that $\mathbb{E}X(t)=t\mathbb{E}X_1$ and $Var X(t)= t Var X_1$. Then, a standard conditioning argument yields
\begin{align*}
\mathbb{E}[X(L_d (t))X(L_d (s))]&= \sum _{w=0}^\infty \sum _{v=0}^\infty \mathbb{E}[X(w)X(v)]P \bigl ( L_d (t)=w, L_d (s)=v \bigr )\\
&= \sum _{w=0}^\infty \sum _{v=0}^\infty  \bigl (  wv (\mathbb{E}X_1)^2+vVar X_1 \bigr )P \bigl ( L_d (t)=w, L_d (s)=v \bigr )\\
&= \mathbb{E} ( L_d (t) L _d (s) ) (\mathbb{E}X_1)^2+ \mathbb{E}L_d (s) Var X_1.
\end{align*}
Taking into account that 
\begin{align*}
\mathbb{E} X(L_d (t))=  \mathbb{E}X_1 \mathbb{E}L_d (t)
\end{align*}
by Wald formula, we have 
\begin{align*}
cov \bigl ( X(L_d (t)); X(L_d (s) ))= cov (L_d (t); L_d (s))(\mathbb{E}X_1)^2+\mathbb{E}L_d (s)Var X_1
\end{align*}
whence, in the special case $s=t$ we have
\begin{align*}
Var [X(L_d (t))]= Var [L_d (t)](\mathbb{E}X_1)^2+\mathbb{E}L_d (t)Var X_1.
\end{align*}
Then the auto-correlation function reads
\begin{align*}
\rho (s,t)&= \frac{cov \bigl ( X(L_d (t)); X(L_d (s) ))}{ \sqrt{Var [X(L_d (t))]} \sqrt{Var [X(L_d (s))]}}\\
&= \frac{cov (L_d (t); L_d (s))(\mathbb{E}X_1)^2+\mathbb{E}L_d (s)Var X_1}{ \sqrt{Var [L_d (t)](\mathbb{E}X_1)^2+\mathbb{E}L_d(t)Var X_1} \sqrt{Var [L_d (s)](\mathbb{E}X_1)^2+\mathbb{E}L_d (s)Var X_1}}.
\end{align*}
b)
Since \ref{asintotica valore atteso}, \ref{asintotica varianza} and \ref{asintotica covarianza} state that for fixed $s$ and large $t$ the following relations hold:
\begin{align*}
cov (L_\alpha (t); L_\alpha (s))\sim C_1 \qquad Var [L_\alpha (t)]\sim C_2 t^{2\alpha} \qquad \mathbb{E}L_\alpha (t) \sim C_3 t^\alpha,
\end{align*}
then by straightforward calculations we obtain the result.
\end{proof}

\subsubsection{Continuous-time limits of discrete-time random walks}

As often mentioned in previous sections, many of the processes studied in this paper are discrete approximation of continuous-time processes: the increasing random walks of type  \ref{subordinatore discreto} and their inverses of type \ref{inverso subordinatore discreto} respectively  converge to subordinators and their inverses, discrete-time Markov chains converge to continuous-time Markov processes, and so forth.
 In this section we prove rigorous results on continuous-time limits. For the notion of Skorokhod $J_1$ and $M_1$ topology, and an exhaustive treaty on path space convergence, consult, for example,  \cite{billingsley}, \cite{skorohodarticolooriginario} and
\cite{whitt}.
 In the following, we denote by $D[0,\infty)$  the space of c\'adl\'ag functions $x:[0,\infty)\to \mathbb{R}$.

As a first result (that we have already anticipated in section \ref{paragrafo sibuya}), we state that the Sibuya random walk \ref{sub stabile discreto} and its inverse \ref{inverso sub stabile discreto} are discrete-time approximations of the stable subordinator and its inverse respectively. The following proposition indeed gives convergence of finite dimensional distributions and also convergence in $J_1$ sense. The proof is not reported since it follows as a special case of the theory given in  \cite{merbook}, \cite{meerscheflimctrw} and all the references therein. The key point of the proof is that is that the Sibuya distribution lies in the domain of attraction of a stable law.
For convenience of the reader, we recall that a random variable $Z$ lies in the domain of attraction of a stable law if, given $Z_1, Z_2, ... , Z_n$ independent  copies of $Z$, it holds that  
\begin{align}
c_n (Z_1+Z_2+...+Z_n)\overset{d}{\to}S, \label{dominio di attrazione}
\end{align}
 where $S$ is stable, for some $c_n \to 0$.

\begin{prop} \label{prop convergenze}
Under the following scaling limit, the Sibuya random walk \ref{sub stabile discreto} and its inverse \ref{inverso sub stabile discreto} respectively converge to a $\alpha -$stable subordinator and to its inverse in the sense of finite dimensional distributions:
\begin{align*}
\{ n^{-\frac{1}{\alpha}}\sigma _{\alpha} (\lfloor nt \rfloor )\}_{t\in \mathbb{R}^+} \overset{fdd}{\longrightarrow} \{\sigma _*(t)\}_{t\in \mathbb{R}^+}, \qquad n\to \infty,
\end{align*}
\begin{align}
\{ n^{-1} L_{\alpha}( \lfloor n^{\frac{1}{\alpha}}t \rfloor) \}_{t\in \mathbb{R}^+} \overset{fdd}{\longrightarrow} \{L_*(t)\}_{t\in \mathbb{R}^+}, \qquad n\to \infty, \label{convergenza inverso sibuya}
\end{align}
where $\lfloor a \rfloor $ denotes the largest integer less than $a$ (or equal to $a$).
 The convergence also holds in   weak sense under the $J_1$ topology on $D[0,\infty)$.
\end{prop}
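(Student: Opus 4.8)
The plan is to deduce the functional limit for the Sibuya random walk $\sigma_\alpha$ from the classical stable functional limit theorem, and then to transfer it to the inverse process $L_\alpha$ by feeding this convergence through the first-passage (inversion) functional on the Skorokhod space.

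First I would record that the Sibuya law lies in the domain of attraction of a positive $\alpha$-stable law. Summing the tail asymptotics $P(Z=k)\sim\frac{\alpha}{\Gamma(1-\alpha)}k^{-1-\alpha}$ recalled above gives $P(Z>k)\sim\frac{1}{\Gamma(1-\alpha)}k^{-\alpha}$, a regularly varying tail of index $-\alpha$ with $\alpha\in(0,1)$ (equivalently $1-\mathbb{E}u^Z=(1-u)^\alpha$ is regularly varying of index $\alpha$ as $u\uparrow1$), so that the convergence \ref{dominio di attrazione} holds with the norming $c_n=n^{-1/\alpha}$. The limiting law is then pinned down by the elementary computation
\[
\mathbb{E}\,\exp\!\bigl(-\xi\,n^{-1/\alpha}\sigma_\alpha(\lfloor nt\rfloor)\bigr)=\Bigl(1-\bigl(1-e^{-\xi n^{-1/\alpha}}\bigr)^{\alpha}\Bigr)^{\lfloor nt\rfloor}\longrightarrow e^{-t\xi^{\alpha}},\qquad n\to\infty,
\]
so that the limit process is the $\alpha$-stable subordinator $\sigma_*$ with $\mathbb{E}e^{-\xi\sigma_*(t)}=e^{-t\xi^{\alpha}}$; combined with independence of the increments of $\sigma_\alpha$ this already yields convergence of the finite-dimensional distributions of $n^{-1/\alpha}\sigma_\alpha(\lfloor nt\rfloor)$ to those of $\sigma_*$. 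For the path-space statement I would invoke the standard functional limit theorem for partial-sum processes of i.i.d.\ summands in the domain of attraction of a stable law (see \cite{skorohodarticolooriginario}, \cite{whitt}, and the formulation used in \cite{meerscheflimctrw}, \cite{merbook}), which gives $\{n^{-1/\alpha}\sigma_\alpha(\lfloor nt\rfloor)\}_{t\in\mathbb{R}^+}\Rightarrow\{\sigma_*(t)\}_{t\in\mathbb{R}^+}$ in $D[0,\infty)$ with the $J_1$ topology; here positivity of the $Z_j$ guarantees that the limit is a subordinator and that no negative-jump or centering complications arise.

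For the inverse, set $A_n(t)=n^{-1/\alpha}\sigma_\alpha(\lfloor nt\rfloor)$. A direct check, using that $\sigma_\alpha$ is integer-valued and strictly increasing, shows that the right-continuous first-passage inverse of $A_n$ equals $A_n^{\leftarrow}(s)=n^{-1}\bigl(L_\alpha(\lfloor n^{1/\alpha}s\rfloor)+1\bigr)$, which differs from $n^{-1}L_\alpha(\lfloor n^{1/\alpha}s\rfloor)$ by $n^{-1}\to0$ uniformly in $s$, while the first-passage inverse of $\sigma_*$ is exactly $L_*$ (so the unusual-looking norming in \ref{convergenza inverso sibuya} is precisely the one dual to the norming of $\sigma_\alpha$). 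Since $\sigma_*$ is almost surely strictly increasing — being a driftless subordinator with infinite L\'evy measure it has no flat stretches — and almost surely unbounded, the inversion functional $x\mapsto x^{\leftarrow}$ is continuous at $\sigma_*$ for the $J_1$ topology, and its value $L_*$ is continuous; applying the continuous mapping theorem to the previous step then yields \ref{convergenza inverso sibuya} in the $J_1$ sense, hence (evaluating at deterministic times, which are a.s.\ continuity points of $L_*$) also in the sense of finite-dimensional distributions.

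The delicate point is this last step: one must invoke the precise continuity statement for the inversion map on $D[0,\infty)$ under $J_1$ and check that the limit falls almost surely into its set of continuity points, i.e.\ that $\sigma_*$ is a.s.\ strictly increasing and unbounded, since away from such paths inversion is discontinuous. Everything else — the bookkeeping with the floor functions, the harmless $+1$ coming from the $\max$-definition \ref{inverso sub stabile discreto} of $L_\alpha$ versus the $\inf$-definition of $L_*$, and the passage from $J_1$ to finite-dimensional convergence — is routine, which is presumably why the authors content themselves with citing \cite{merbook} and \cite{meerscheflimctrw}.
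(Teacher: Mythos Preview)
Your proposal is correct and follows exactly the approach the paper has in mind: the authors do not actually write out a proof but simply note that the Sibuya law lies in the domain of attraction of a stable law and refer to \cite{merbook}, \cite{meerscheflimctrw} for the standard machinery, which is precisely the functional-CLT-plus-continuous-mapping argument you have spelled out. Your expansion is a faithful and accurate unpacking of what those references contain, including the careful bookkeeping with the inversion map and the a.s.\ strict increase of $\sigma_*$.
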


We now study the continuous-time limit of the same subclass of semi-Markov chains of type A which has been considered in section \ref{Time-changing Markov chains with i.i.d. jumps}, namely those processes obtained by time changing Markov chains having i.i.d. jumps. 

We firstly construct a rescaled version of such processes. Consider a sequence of discrete-time Markov chains with i.i.d. jumps, indexed by the parameter $n$:
\begin{align}
X^{(n)}(t)= \sum _{j=1}^t X^{(n)}_j \qquad t\in \mathbb{N} \qquad X^{(n)}(0)=0. \label{vvv}
\end{align}
Furthermore we  consider a sequence of rescaled random walks with positive jumps of type \ref{subordinatore discreto}, indexed by $n$:
\begin{align}
\sigma _d ^{(n)}(t) = \sum _{j=1}^t Z_j^{(n)}, \qquad t\in \mathbb{N}, \label{Z1}
\end{align}
such that $Z_j^{(n)}$ may now have, in general, real values. Its inverse counting process
\begin{align}
L_d^{(n)}(t)= \max \{ k\in \mathbb{N}_0: \sigma _d^{(n)}(k)\leq t\} \qquad t\in \mathbb{R}^+\label{www}
\end{align}
allows us to define  the time-changed process 
\begin{align*}
Y^{(n)}(t)= X^{(n)}(L_d^{(n)}(t)), \qquad t\in \mathbb{R}^+.
\end{align*}
We also consider
\begin{align*}
X^{(n)}(\lfloor t\rfloor )= \sum _{j=1}^{\lfloor t\rfloor} X^{(n)}_j, \qquad \qquad \sigma _d ^{(n)}(\lfloor t \rfloor) = \sum _{j=1}^{\lfloor t \rfloor} Z_j^{(n)}, \qquad t\in \mathbb{R}^+.
\end{align*}

In the following we denote by $\mathcal{D}(W)$ the set of points of discontinuity of the process $W,$ namely $\mathcal{D}(W)= \{ t>0: W(t^-)\neq W(t)\}$.

\begin{te} \label{teorema convergenze}
If, for $n\to \infty$, the following three conditions hold:

i) $X^{(n)}(\lfloor nt\rfloor )$ converges to a L\'evy Process $A(t)$ in $J_1$ sense,

ii) $\sigma _d ^{(n)}(\lfloor nt \rfloor)$ converges to a subordinator $\sigma (t)$ in $J_1$ sense,

iii) the limit processes $A(t)$ and $\sigma (t)$ are such that $\mathcal{D}(A) \cap \mathcal{D}(\sigma) = \emptyset$ almost surely,
\vspace{0.05cm}

then $Y^{(n)}(t)= X^{(n)}(L_d^{(n)}(t))$ converges in $M_1$ sense to the time changed process $A(L(t))$, 

where $L$ is the inverse  of $\sigma$.
\end{te}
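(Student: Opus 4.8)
Main idea of the proof.
The plan is to reduce the statement to a known continuous-mapping-type result for compositions of stochastic processes, in the spirit of the CTRW limit theorems of Meerschaert--Scheffler and Straka--Henry. Conditions (i) and (ii) give joint convergence of the two driving sequences, and the composition $X^{(n)}\circ L_d^{(n)}$ should be handled by first passing to the inverse $L_d^{(n)}$ and then composing. The key structural fact is that $L_d^{(n)}$ is the (generalized, right-continuous) inverse of $\sigma_d^{(n)}$, so the inversion step is governed by the classical result that, on the set of strictly increasing, unbounded, continuous-at-the-relevant-points limit paths, the map $\sigma \mapsto L$ taking a nondecreasing c\`adl\`ag function to its inverse is continuous in the $J_1$ (indeed $M_1$) topology; since $\sigma$ here is a strictly increasing subordinator with a.s.\ strictly increasing paths, this applies. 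Thus from (ii) I would deduce $L_d^{(n)}(\lfloor n\,\cdot\rfloor^{-1}\text{-rescaled}) \Rightarrow L$ in $J_1$, where $L$ is the inverse of $\sigma$; note that $L$ has continuous paths because $\sigma$ is strictly increasing.

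Steps in order.
First, I would set up the correct scaling: rewrite $Y^{(n)}(t)=X^{(n)}(L_d^{(n)}(t))$ after the rescaling implicit in (i)--(ii), so that the inner process converging is $n^{-1}L_d^{(n)}(t) \Rightarrow L(t)$ (consistent with the convention $\sigma^{(n)}_d(\lfloor n \cdot\rfloor)\Rightarrow\sigma$), and identify $Y^{(n)}$ with the composition $A^{(n)}\circ L^{(n)}$ of the two rescaled processes. Second, I would upgrade the two marginal $J_1$ convergences in (i) and (ii) to \emph{joint} convergence in $D[0,\infty)\times D[0,\infty)$ with the product $J_1$ topology; this is legitimate because $X^{(n)}$ and $\sigma_d^{(n)}$ are independent, so the joint laws are product measures and tightness plus marginal convergence gives joint convergence (Prokhorov). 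Third, apply the inversion continuity result to the $\sigma$-coordinate to get joint convergence of $(A^{(n)}, L^{(n)})$ to $(A,L)$, using that $L$ is a.s.\ continuous and strictly increasing up to its (a.s.\ infinite, by strict increase of $\sigma$) lifetime, so the inverse map is a.s.\ continuous at $\sigma$. Fourth, invoke the composition theorem: the map $(x,y)\mapsto x\circ y$ from $D\times D_{\uparrow}\to D$ is continuous at pairs $(x,y)$ such that $y$ is continuous and nondecreasing and $x$ has no discontinuity at any point of the range of $y$ that coincides with a "flat-to-jump" issue — more precisely, by Whitt's theorem (see \cite{whitt}), composition is continuous in $M_1$ at $(x,y)$ provided $y$ is continuous, nondecreasing, and $\mathcal{D}(x)\cap\{\text{values where }y\text{ is constant}\}=\emptyset$; condition (iii), $\mathcal{D}(A)\cap\mathcal{D}(\sigma)=\emptyset$ a.s., is exactly what guarantees that $A$ does not jump at a point where $L$ has a flat stretch (the flat stretches of $L$ sit precisely at the jump locations of $\sigma$). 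Finally, the continuous-mapping theorem yields $A^{(n)}\circ L^{(n)} \Rightarrow A\circ L$ in $M_1$, which is the claim.

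Main obstacle.
The hard part is the composition step, for two reasons. The composition map is \emph{not} continuous in $J_1$ when the outer process jumps — this is exactly why the conclusion is only $M_1$ and not $J_1$ — so I must be careful to use the $M_1$ version of the continuous-mapping argument and to verify its precise hypotheses: that $L$ is continuous (fine, from strict monotonicity of $\sigma$), nondecreasing, and that a.s.\ $A$ has no jump at any value $v$ such that $L^{-1}(\{v\})$ is a nondegenerate interval, i.e.\ at any $v$ in the (countable) set of jump sizes/locations of $\sigma$; translating $\{$flat stretch of $L$ at level $v\} \leftrightarrow \{v \text{ is a jump time of }\sigma\}$ and then reading off that $A(v^-)=A(v)$ for all such $v$ from (iii) is the crux. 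The second subtlety is a boundedness/lifetime issue: to apply the inversion-continuity and composition results on all of $[0,\infty)$ one wants $\sigma(t)\to\infty$ and hence $L(t)\to\infty$, which holds because a strictly increasing subordinator with $\int_0^\infty\nu(dx)=\infty$ (our standing assumption) is unbounded; I would state this explicitly. Everything else — the joint convergence from independence, the inversion map continuity, and the final continuous-mapping application — is by now standard and can be cited from \cite{whitt}, \cite{meerscheflimctrw}, and \cite{straka}.
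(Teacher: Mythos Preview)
Your proposal is correct and follows essentially the same route as the paper: the paper's proof is a brief sketch that cites \cite{becker}, \cite{meertri}, and \cite{whitt}, obtains $n^{-1}L_d^{(n)}(t)\Rightarrow L(t)$ by a continuous-mapping argument applied to the inversion map, and then applies a second continuous-mapping argument to the composition $(X^{(n)}(\lfloor n\cdot\rfloor),\,n^{-1}L_d^{(n)})\mapsto X^{(n)}\circ L_d^{(n)}$. You have simply unpacked these two steps in more detail---making explicit the joint convergence via independence, the role of condition (iii) in ensuring that $A$ does not jump on the flat stretches of $L$, and the reason the conclusion is only $M_1$---all of which the paper leaves to the cited references.
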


\begin{proof}
We follow the same steps as in [\cite{becker}, Theorem 3.1] and [\cite{meertri}, Theorem 2.1]. We  also make use of the continuous mapping theorem (see [\cite{whitt}, Theorem 3.4.3] and \cite{whitt2}).
By ii), we have that $n^{-1}L_d^{(n)}( t)$ converges to $L(t)$ in $M_1$ topology by a continuous mapping argument (a random function is mapped into its inverse). Then, by another continuous mapping argument, where the couple $\bigl (X^{(n)}(\lfloor nt\rfloor ), n^{-1}L_d^{(n)}( t)\bigr)$ is mapped into the composition, the proof is completed.
\end{proof}

\begin{os}
Theorem \ref{teorema convergenze} applies, for instance, to random walks whose jumps lie in the domain of attraction of a stable law (indeed, following the same steps as in proof of Prop. \ref{prop convergenze}, we actually get the hypotheses i) and ii) on $J_1$ convergence).
But, to be able to understand the importance of Theorem \ref{teorema convergenze}, it is  natural to wonder which other random walks  converge in $J_1$ sense, that is, if there exists some simple criterion to characterize random walks converging  in $J_1$ sense under a suitable scaling limit. One answer is given by Skorokhod in Theorem 2.7 of \cite{skorohod}: a sequence of processes $\xi _n(t)= \sum _{k=1}^{\lfloor nt \rfloor} \xi _k^{n}$, such that the $\xi _k^{n}$ are i.i.d. for each $n$, converges weakly in $J_1$ topology to the process $\xi (t)$ if for each $t$ the random variable  $\xi _n(t)$ converges in distribution to $\xi (t)$.
So, if the addends $X_j^{(n)}$ in \ref{vvv} are such that $X^{(n)}(\lfloor nt\rfloor)\overset{d}{\to}A(t)$  for any $t\in \mathbb{R}^+$, then $X^{(n)}(\lfloor nt \rfloor)$ converges weakly to the L\'evy process $A(t)$  under the $J_1$ topology. 
For suitable sequences $P(X^{(n)}_j=0)$, the limit L\'evy process $A(t)$ will have finite activity, that is it will be   a continuous-time Markov chain with i.i.d. jumps. In the same way, if the positive addends $Z_j^{(n)}$ in \ref{Z1} are such that $\sigma _d^{(n)}(\lfloor nt\rfloor)\overset{d}{\to}\sigma(t)$ for any $t\in \mathbb{R}^+$, then $\sigma _d^{(n)}(\lfloor nt\rfloor)$ converges weakly to a subordinator $\sigma(t)$ in $J_1$ sense.
\end{os}

\begin{os}
Conversely, one could wonder if, given a continuous-time semi-Markov process $A(L(t))$, there exists a discrete-time semi-Markov chain converging to it under a suitable scaling limit. The answer is positive. The first step is to observe that, given the limit processes $A(t)$ and $\sigma (t)$, there exist approximating discrete-time random walks converging to them. This is due to a well-known result on triangular array convergence (see, for example, [\cite{gut}, page 442] and also \cite{gnedenko} for a complete discussion):  since, for each $t$, the random variables $A(t)$ and $\sigma(t)$ are infinitely divisible, there exist i.i.d. random variables $X_k^{(n)}$ and i.i.d. random variables $Z_k^{(n)}$ such that
$X^{(n)}(\lfloor nt\rfloor)=\sum _{k=1}^{\lfloor nt\rfloor} X_k^{(n)}$ converges in distribution to $A(t)$ and $\sigma _d ^{(n)}(\lfloor nt\rfloor)=\sum _{k=1}^{\lfloor nt\rfloor} Z_k^{(n)}$ converges in distribution to $\sigma (t)$ (furthermore Theorem 2.7 in \cite{skorohod}  guarantees also $J_1$ weak convergence). Then, once identified $\sigma _d ^{(n)} $, one  uses its inverse $L_d^{(n)}$ to construct the semi-Markov process $Y^{(n)}(t )= X^{(n)}(L_d^{(n)}(t))$.
\end{os}

\subsection{Semi-Markov chains of type B: generalized fractional finite-difference equations}

We note that, by some algebraic manipulations, equation \ref{equazione Markov 1} governing the Markov chain $\{\mathcal{X}(t)\}_{t\in \mathbb{N}_0}$ can be re-written in the form of a finite difference equation:
\begin{align}
(\mathcal{I-B})P_{ij}(t)&= \sum _{l\in S} \lambda _i (H_{il}\mathcal{B}-\delta _{il})P_{lj}(t), \qquad P_{ij}(0) =\delta_{ij},
\label{equazione Markov 2} 
\end{align}
where $\lambda _i= \frac{p_i}{q_i}$, $\delta _{ij}$ denotes the Kronecker delta, $\mathcal{B} p(t)= p(t-1)$ is the shift operator acting on the time variable, and hence $\mathcal{I-B}$ represents the discrete-time  derivative. 

Equation \ref{equazione Markov 2} is a discrete-time version of equation \ref{equazioni kolmogorov classiche} governing continuous time Markov chains. 
 This fact can be heuristically seen by making use of a suitable scaling limit. Indeed, assume that the time steps have size $1/n$, so that the shift operator acts as $\mathcal{B}_{1/n} p(t)= p(t-1/n)$. Then scale $\lambda _i\to \lambda _i /n $ and divide  both members of \ref{equazione Markov 2} by $1/n$.  The equation reads
\begin{align}
\frac{\mathcal{I-B}_{1/n}}{1/n} \, P_{ij}(t)&= \sum _{l\in S} \lambda _i(H_{il}\, \mathcal{B}_{1/n}-\delta _{il})P_{lj}(t)\qquad    t \in \biggl \{\frac{1}{n}, \frac{2}{n},\dots \biggr \},  \qquad  P_{ij}(0) =\delta_{ij},
\end{align}
and  the continuous time limit $n\to \infty$ gives
\begin{align}
\frac{d}{dt} P_{ij}(t)&= \sum _{l\in S} \lambda _i (H_{il}-\delta _{il})P_{lj}(t), \qquad    P_{ij}(0) =\delta_{ij} \qquad t\in \mathbb{R}^+,
\end{align}
which is the Kolmogorov backward equation \ref{equazioni kolmogorov classiche}.

We now consider a semi-Markov chain of type $B$ (according to Definition. \ref{defin semimarkov })  and we denote by $\gamma (t)$ the (discrete) time spent by the process  in the current position:
\begin{align*}
\gamma (t)= inf \{k \in \mathbb{N} : \mathcal{Y}(t-k)\neq \mathcal{Y}(t)\} \qquad \gamma (0)=1.
\end{align*}
 Starting from a generic renewal time $\tau$ (i.e. such that $\gamma (\tau)=1$), which is a regeneration time for the process,
we derive a system of backward equations for the transition functions
\begin{align*}
p_{ij} (t)& =P(\mathcal{Y}(t+\tau)=j|\mathcal{Y}(\tau)=i, \gamma(\tau)=1)\\
&= P(\mathcal{Y}(t)=j| \mathcal{Y}(0)=i, \gamma  (0)=1) \qquad i,j \in S \quad t \in \mathbb{N}_0,
\end{align*}
where the last equality follows by time homogeneity. Such a system is given by \ref{Equazione frazionariaaa}  of the following theorem.

\begin{te} \label{teorema equazione governante}
Under the initial condition $p_{ij}(0)=\delta _{ij}$, the set of functions $\{ p_{ij}(t), i,j\in S, t\in \mathbb{N}_0\}$ solve the following system of  equations:
\begin{align}
\widetilde{\mathcal{D}}_t\, p_{ij}(t) -P(Z>t)p_{ij}(0)&= \sum _{l\in S} \lambda _i (H_{il}\mathcal{B} -\delta _{il})p_{lj}(t) +\lambda _i p_{ij}(0) \,\delta _{0\,t}, \label{Equazione frazionariaaa}    
\end{align}
where
\begin{align}
\widetilde{\mathcal{D}}_t\, p_{ij}(t)= \sum _{\tau =0}^\infty \bigl ( p_{ij}(t)-p_{ij}(t-\tau) \bigr) P(Z=\tau), \qquad t\in \mathbb{N}_0, \label{derivata frazionaria discreta generalizzata}
\end{align} 
while $\mathcal{B}$ is the shift operator such that $\mathcal{B} p(t)=p(t-1)$ and  $\lambda _i= p_i/q_i$.
\end{te}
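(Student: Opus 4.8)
The plan is to reduce the statement to an algebraic identity between generating functions, the two inputs being a classical regeneration (renewal) equation for the transition functions and the explicit form of the type B waiting-time generating function recorded in Definition \ref{defin semimarkov }.

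First I would write down the regeneration equation. Since $0$ is assumed to be a renewal epoch (i.e.\ $\gamma(0)=1$), the chain $\mathcal Y$ stays in its initial state $i$ throughout the first sojourn $J_0$ and, given $X_0=i$, jumps at time $J_0$ to a state $l$ with probability $H_{il}$, independently of $J_0$; the evolution after $J_0$ is an independent copy of $\mathcal Y$ started afresh from $X_1$. Conditioning on $J_0$ and on $X_1$ and using time-homogeneity, this gives, for $t\in\mathbb{N}_0$,
\begin{align*}
p_{ij}(t)= P(J_0>t\mid X_0=i)\,\delta_{ij}+\sum_{r=1}^{t}P(J_0=r\mid X_0=i)\sum_{l\in S}H_{il}\,p_{lj}(t-r),
\end{align*}
with the convention $p_{lj}(s)=0$ for $s<0$; at $t=0$ this reads $p_{ij}(0)=\delta_{ij}$.

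Next I would pass to generating functions in the variable $t$. Writing $\widehat p_{ij}(u)=\sum_{t\ge0}u^tp_{ij}(t)$, $\psi(u)=\mathbb E u^Z$, and $\phi_i(u)=p_iu/(1-q_i\psi(u))=\mathbb E(u^{J_0}\mid X_0=i)$ (from the definition of type B chains), together with the elementary identity $\sum_{t\ge0}u^tP(J_0>t\mid X_0=i)=(1-\phi_i(u))/(1-u)$, the regeneration equation becomes
\begin{align*}
\widehat p_{ij}(u)=\frac{1-\phi_i(u)}{1-u}\,\delta_{ij}+\phi_i(u)\sum_{l\in S}H_{il}\,\widehat p_{lj}(u).
\end{align*}
On the other hand, using $\sum_tu^t\widetilde{\mathcal D}_tp_{ij}(t)=(1-\psi(u))\widehat p_{ij}(u)$, $\sum_tu^tP(Z>t)=(1-\psi(u))/(1-u)$, and $\sum_tu^t(\mathcal Bp_{lj})(t)=u\,\widehat p_{lj}(u)$, the generating function of the left-hand side of \ref{Equazione frazionariaaa} equals $(1-\psi(u))\widehat p_{ij}(u)-\frac{1-\psi(u)}{1-u}\delta_{ij}$, while that of the right-hand side equals $\lambda_iu\sum_lH_{il}\widehat p_{lj}(u)-\lambda_i\widehat p_{ij}(u)+\lambda_i\delta_{ij}$ with $\lambda_i=p_i/q_i$.

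Finally I would close the argument by solving the regeneration equation for $\sum_lH_{il}\widehat p_{lj}(u)$, substituting into the last expression, and simplifying, using $1/\phi_i(u)=(1-q_i\psi(u))/(p_iu)$ and, crucially, $p_i+q_i=1$: the coefficient of $\widehat p_{ij}(u)$ collapses to $1-\psi(u)$ and the coefficient of $\delta_{ij}$ collapses to $-(1-\psi(u))/(1-u)$, so the two generating functions coincide. Since a sequence indexed by $\mathbb{N}_0$ is uniquely determined by its generating function, this proves \ref{Equazione frazionariaaa}, and a direct check shows both sides vanish at $t=0$, consistently with $p_{ij}(0)=\delta_{ij}$. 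I expect the main points requiring care to be, first, the justification of the regeneration step --- that conditioning on $\gamma(0)=1$ genuinely makes $0$ a regeneration time, so that the post-$J_0$ chain is an independent fresh copy --- and, second, the bookkeeping of the ``boundary'' terms $P(Z>t)p_{ij}(0)$ and $\lambda_ip_{ij}(0)\delta_{0t}$ in \ref{Equazione frazionariaaa}: these are exactly the constant-term corrections that make the generating-function identity hold at $t=0$, and omitting either one breaks the equation there.
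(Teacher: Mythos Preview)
Your proposal is correct and follows essentially the same route as the paper: both start from the discrete renewal (regeneration) equation for $p_{ij}(t)$, pass to generating functions, insert the type~B waiting-time transform $\phi_i(u)=p_iu/(1-q_i\mathbb{E}u^Z)$, and reduce everything to an algebraic identity that hinges on $p_i+q_i=1$. The only cosmetic difference is that the paper derives \ref{Equazione frazionariaaa} forward---multiplying the transformed renewal equation through by $1-q_i\mathbb{E}u^Z$, inverting, and then splitting $p_{ij}(t)=p_ip_{ij}(t)+q_ip_{ij}(t)$ before dividing by $q_i$---whereas you verify the identity by computing the generating functions of both sides of \ref{Equazione frazionariaaa} and matching them via the renewal relation.
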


\begin{os} It is remarkable to note that \ref{Equazione frazionariaaa} (governing discrete-time semi-Markov chains) can be obtained by \ref{equazione Markov 2} (governing the corresponding Markov ones),  by substituting the discrete-time derivative on the left-hand side with the convolution operator $\widetilde{\mathcal{D}}_t$. Conversely, $\widetilde{\mathcal{D}}_t$  reduces to the discrete derivative $\mathcal{I}-\mathcal{B}$ in the trivial case where $Z=1$ almost surely.

This is analogous to what happens in continuous time, where equation \ref{equazione frazionaria generalizzata} (governing  semi-Markov processes) is obtained from \ref{equazioni kolmogorov classiche} (governing Markov processes) by changing the time derivative with the generalized fractional derivative
\begin{align*}
\mathcal{D}_t\, p_{ij}(t)= \int _0^\infty \bigl ( p_{ij}(t)-p_{ij}(t-\tau) \bigr) \nu (d\tau) \qquad t\in \mathbb{R}^+.
\end{align*} 
Equation \ref{Equazione frazionariaaa} can be interpreted as a discrete-time version of \ref{equazione frazionaria generalizzata}, where the integral in the time variable is replaced  by a series, the L\'evy measure $\nu$ is replaced by the discrete density of $Z$ and the tail of the L\'evy measure $\overline{\nu}$ is replaced by the survival function of $Z$.
\end{os}

\begin{os}
We call the operator \ref{derivata frazionaria discreta generalizzata} generalized fractional discrete derivative. The reason of this name is that in the case where $Z$ follows the Sibuya distribution \ref{distribuzione di Sybuya}, the  operator \ref{derivata frazionaria discreta generalizzata} reduces to the fractional power of the discrete derivative. Indeed, by simple calculations, we have
\begin{align*}
\widetilde{\mathcal{D}}_t\, p_{ij}(t)&= \sum _{\tau =0}^\infty \bigl ( p_{ij}(t)-p_{ij}(t-\tau) \bigr) P(Z=\tau) \\
&= \sum _{\tau =1}^\infty \bigl ( p_{ij}(t)-p_{ij}(t-\tau) \bigr)(-1)^{\tau-1}\binom{\alpha}{\tau}  \\
&= \sum _{\tau =0}^\infty \binom{\alpha}{\tau} (-1)^\tau p_{ij}(t-\tau)\\
&=(\mathcal{I}-\mathcal{B})^\alpha p_{ij}(t),
\end{align*} 
where $\mathcal{B}$ is the shift operator in the time variable, such that $\mathcal{B}p(t)=p(t-1)$.
We observe that such operator also appears in ARFIMA models (see \cite{granger}).
The interested reader can find a pioneering study of the operator \ref{derivata frazionaria discreta generalizzata} in  \cite{mmdn}.
\end{os}

\begin{proof}[Proof of Theorem \ref{teorema equazione governante}]
The discrete-time renewal equation reads
\begin{align*}
p_{ij}(t)=\sum _{\tau=0}^t \sum _{l\in S} H_{il} P(J_0=\tau|X_0=i) p_{lj}(t-\tau) + P(J_0>t|X_0=i)\delta_{ij}.
\end{align*}
By applying the generating function to both members we have
\begin{align}
\tilde{p}_{ij}(u)= \sum _{l\in S} H_{il} \mathbb{E}(u^{J_0}|X_0=i)     \tilde{p}_{lj}(u)+\sum _{t=0}^\infty u^t P(J_0>t|X_0=i)\delta_{ij}. \label{renewal equation trasformata}
\end{align}
Note that for any positive and integer valued random variable $Y$ we have
\begin{align}
\sum _{t=0}^\infty u^t P(Y>t)& = \sum _{t=0}^\infty u^t \sum _{k=t+1}^\infty P(Y=k)=
\sum _{k=1}^\infty \sum _{t=0}^{k-1}u^t P(Y=k) \notag \\ &= \sum _{k=1}^\infty \frac{1-u^k}{1-u} P(Y=k)= \frac{1-\mathbb{E}u^{Y}}{1-u}. \label{funzione di sopravvivenza}
\end{align}
Hence, using \ref{bla bla bla bla} we have
\begin{align*}
\sum _{t=0}^\infty u^t P(J_0>t|X_0=i)=\frac{1-\mathbb{E}(u^{J_0}|X_0=i)}{1-u}=   \frac{1}{1-u}\frac{1-q_i \mathbb{E}u^{Z}-p_iu}{1-q_i\mathbb{E}u^Z}= \frac{p_i+q_i \frac{1-\mathbb{E}u^Z}{1-u}}{1-q_i\mathbb{E}u^Z}.
\end{align*}
Thus \ref{renewal equation trasformata} becomes
\begin{align*}
\tilde{p}_{ij}(u)= \sum _{l\in S} H_{il} \frac{p_iu}{1-q_i \mathbb{E}u^Z}  \,  \tilde{p}_{lj}(u)+  \frac{p_i+q_i \frac{1-\mathbb{E}u^Z}{1-u}}{1-q_i\mathbb{E}u^Z}\delta _{ij} .
\end{align*}
and can be re-written as
\begin{align*}
\tilde{p}_{ij}(u)-q_i \mathbb{E}u^Z \tilde{p}_{ij}(u)= \sum _{l\in S} H_{il}\, p_i\, u\, \tilde{p}_{lj}(u) + \biggl ( p_i+q_i \frac{1-\mathbb{E}u^Z}{1-u} \biggr )\delta _{ij}. 
\end{align*}
Taking the inverse  transform (by using \ref{funzione di sopravvivenza} for the variable $Z$), the previous equation becomes
\begin{align*}
p_{ij}(t)-q _i \sum _{\tau =0}^\infty p_{ij}(t-\tau) P(Z=\tau) = \sum _{l \in S}\, H_{il} \,p_i\, p_{lj}(t-1)+(p_i\delta _{0t}+q_i P(Z>t)\, )\delta _{ij}
\end{align*}
where the  summation is extended to infinity  because $p_{ij}(t)=0$ for $t<0$. Writing $p_{ij}(t)= p_i p_{ij}(t)+q_i p_{ij}(t)$ on the left-hand side, and dividing both sides by $q_i$ (with the position $\lambda _i= p_i /q_i$) we obtain the desired equation.
\end{proof}

\begin{os}
We can retrace the same steps as in the proof of Theorem \ref{teorema equazione governante} in order to find a fractional-type equation governing the Sibuya counting process \ref{inverso sub stabile discreto}. Let $J_0 \sim Sibuya (\alpha)$ and $H_{ij}=1$ if $j=i+1$. We have
\begin{align*}
(I-\mathcal{B})^\alpha p_{(i+1)\, j}(t)  -(-1)^t\binom{\alpha -1}{t}\delta _{ij}   = p_{(i+1)\, j}(t) -p_{ij}(t),
\end{align*}
which is a discrete-time version of equation $\partial ^\alpha _t l(x,t)=-\partial _x l(x,t)+\overline{\nu}(t)\delta _{	ij}$ governing the inverse stable subordinator.
\end{os}

\begin{os}
To be exhaustive, we specify that also semi-Markov chains of type A have  governing equations involving the operator \ref{derivata frazionaria discreta generalizzata}. However, such equations have a cumbersome form, which is certainly not the fractional counterpart of the Markovian equation \ref{equazione Markov 2}. The interested reader can find them by writing the related Markov renewal equation and following the same steps as in the proof of Theorem \ref{teorema equazione governante}.
\end{os}

\subsubsection{Continuous-time limit of the governing equation.}

 Convergence of  \ref{Equazione frazionariaaa} to \ref{equazione frazionaria generalizzata} can be easily obtained in the special case in which $Z$ is regularly varying of order $\alpha \in (0,1)$. For convenience of the reader, we recall below the notion of regular variation (for further details consult \cite{bingham2} and \cite{miko}):
 \begin{defin}
 A non negative random variable $Z$ is said to be regularly varying of order $\alpha$ if its survival function satisfies one of the following equivalent conditions:
 \vspace{0.05cm}
 
 a)  $P(Z>t)=t^{-\alpha}L(t)$, such that $L$ is a slowly varying function (i.e.  $\lim _{n\to \infty} L(nt)/L(n)=1$). 
 \vspace{0.1cm}
 
b) $\lim _{n\to \infty} \frac{P(Z>nt)}{P(Z>n)}=t^{-\alpha}$.
 \vspace{0.1cm}
 \end{defin}
Moreover, observe that the class of regularly varying distributions includes the Sibuya distribution \ref{distribuzione di Sybuya}. If $Z$ is regularly varying, then, under a suitable scaling limit, equation \ref{Equazione frazionariaaa} converges, for $t>0$, to equation \ref{equazione frazionaria vera} governing fractional processes, i.e. Markov processes time changed by an independent inverse stable subordinator. Indeed, by letting the time steps have size $1/n$ and by the scaling
 \begin{align*}
  Z\to \frac{Z}{n} \qquad \lambda _i\to \lambda _i P(Z>n)  \Gamma (1-\alpha)\qquad    \forall i \in \mathcal{S} \qquad \alpha \in (0,1),
\end{align*} 
then,  for  $t\in  \{ \frac{1}{n}, \frac{2}{n}, \dots\}$, equation \ref{Equazione frazionariaaa} reads
 \begin{align}
\sum _{\tau =\frac{1}{n}}^\infty \bigl ( p_{ij}(t)-p_{ij}(t-\tau) \bigr) \frac{1}{\Gamma (1-\alpha)} \frac{P(Z=n\tau)}{P(Z>n)}- \frac{1}{\Gamma (1-\alpha)}\frac{P(Z>nt)}{P(Z>n)}p_{ij}(0)&= \sum _{l\in S} \lambda _i  (H_{il}\mathcal{B}_{\frac{1}{n}}-\delta _{il})p_{lj}(t), \label{equazione slowly varying}
\end{align}
where $\mathcal{B}_{\frac{1}{n}}p(t)=p(t-\frac{1}{n}$).
Taking into account that \footnote{ 
 Since Z is slowly varying, we have
\begin{align*}
\frac{P(Z=nt)}{P(Z>n)} = \frac{P(Z>nt-1)-P(Z>nt)}{P(Z>n)}= \frac{(nt-1)^{-\alpha}L(nt-1)-(nt)^{-\alpha}L(nt)}{n^{-\alpha}L(n)}   = \frac{(nt)^{-\alpha} L(nt)\bigl [(1-\frac{1}{nt})^{-\alpha}\frac{L(nt-1)}{L(nt)}-1 \bigr ]}{n^{-\alpha}L(n)}
\end{align*}
Thus, using the properties of the slowly varying function $L$ and the expansion $(1-\frac{1}{nt})^{-\alpha}\sim 1+\frac{\alpha}{nt}$  for large $n$, the proof is complete. 
}
\begin{align}
\frac{P(Z=nt)}{P(Z>n)}\sim \alpha t^{-\alpha -1}\frac{1}{n}\qquad \textrm{for large $n$}
\end{align}
in the limit $n \to \infty$ equation \ref{equazione slowly varying} reduces to
\begin{align*}
\int _0^\infty \bigl ( p_{ij}(t)-p_{ij}(t-\tau) \bigr) \frac{\alpha \tau ^{-\alpha -1}}{\Gamma (1-\alpha)}d\tau -\frac{t^{-\alpha}}{\Gamma (1-\alpha)}\delta _{ij} =\sum _{l\in S} \lambda _i  (H_{il}-\delta _{il})p_{lj}(t) \qquad t\in \mathbb{R}^+
\end{align*}
which coincides with \ref{equazione frazionaria vera}.

\section{A remarkable special case: the fractional Bernoulli processes}\label{remarkable}

We here analyze  a special case of the theory expounded in the previous section, by constructing two processes which are discrete-time versions of the fractional Poisson process.
We recall that the fractional Poisson process  is a continuous time counting process  whose i.i.d.\  waiting times $J_0, J_1,\dots$, have common law $P(J_k>t)=\mathcal{E}(-\lambda t^\alpha)$, $\alpha \in (0,1)$, where $\mathcal{E}(x)= \sum _{k=0}^\infty \frac{x^k}{\Gamma (1+\alpha k)}$ is the Mittag--Leffler function. As recalled in Section 2.2, it is obtained by the composition of a Poisson process with an  independent inverse stable subordinator.  For some references, consult \cite{Beghin2,Beghin,Kumar1,mainardi,meerpoisson}; see also \cite{Beghin3} and \cite{leonenko2} for  time-inhomogeneous extensions of the model.
The fractional Poisson process is intimately connected to fractional calculus as   its state probabilities solve the  forward equation
\begin{align}
\frac{d^\alpha}{dt^\alpha}p_k(t)-\frac{t^{-\alpha}}{\Gamma (1-\alpha)}p_k(0)=-\lambda p_k(t)+\lambda p_{k-1}(t) \label{equazione fractional Poisson}
\end{align}
where $\frac{d^\alpha}{dt^\alpha}p_k(t) $ denotes the Riemann--Liouville fractional derivative.
We propose two discrete-time approximations of such a process, respectively called fractional Bernoulli of types A and B (in the sense of the classification of semi-Markov chains given in the previous section).
We have been inspired by \cite{pillai}, where the author defines a so-called Discrete Mittag--Leffler distribution. The reason  of this name is that such a distribution converges to the classical continuous Mittag--Leffler distribution \ref{Mittag Leffler distribution} under a suitable scaling limit. We here define two distributions, named discrete Mittag--Leffler distributions of type $A$ and $B$, which are similar to that studied in \cite{pillai}, on which we base the definition of the related Bernoulli processes.

\subsection{Fractional Bernoulli process of type A}

\begin{defin} Let $M$ be a geometric random variable with law $P(M=k)= pq^{k-1}, \,\, k\in \mathbb{N},$ 
and let the $Z_j$ be i.i.d. Sibuya random variables as in \ref{distribuzione di Sybuya}.
\vspace{0.05cm}
 A random variable  $J^A$ is said to follow a $DML_A$ (i.e. discrete Mittag--Leffler of type A) distribution if it can be expressed as a  compound geometric sum 
\begin{align*}
J^A=\sum _{k=1}^M Z_k
\end{align*}
and thus has generating function
\begin{align}
\mathbb{E}u^{J^A}= \frac{1-(1-u)^\alpha}{1+\frac{q}{p}(1-u)^\alpha}, \qquad \alpha \in (0,1). \label{trasformata DMLa}
\end{align}

\end{defin}

\begin{os}
 $DML_A$ is a discrete approximation of the Mittag--Leffler distribution \ref{Mittag Leffler distribution}. Indeed by rescaling 
\begin{align}
Z_k \to \frac{Z_k}{n} \qquad J^A \to \frac{J^A}{n}  \qquad \frac{p}{q}=\lambda \to \frac{\lambda}{n^\alpha}, \label{scaling Mittag--Leffler A}
\end{align}
we obtain the rescaled random variable
\begin{align}
J^{A(n)}= \frac{1}{n}\sum _{j=1}^{M^{(n)}}Z_j  \label{mittag leffler riscalate A}
\end{align}
such  that
\begin{align*}
\lim _{n\to \infty}\mathbb{E}e^{-sJ^{A(n)}}= \frac{\lambda}{\lambda +s^\alpha}   \quad s\in \mathbb{R}^+,
\end{align*}
where $\lambda/(\lambda +s^\alpha)$ is  the Laplace transform of the Mittag--Leffler distribution (see \ref{trasformata Mittag--Leffler}).
\end{os}

We are now ready to define a discrete-time approximation of the fractional Poisson process.

\begin{defin} \label{definizione fractional Bernoulli}
Let $T_n^A= J_0^A+ J_1^A+\dots J_{n-1}^A$ be a renewal chain of type \ref{renewal chain Tn}, with waiting times $J_0^A, J_1^A, \dots,$ having common $DML_A$ distribution. The related counting process 
\begin{align*}
\{N_A(t)\}_{t\in \mathbb{N}_0}=max \{n\in \mathbb{N}_0: T_n^A \leq t\}
\end{align*}
is called fractional Bernoulli counting process of type A.
\end{defin}

In the case $\alpha =1$, we have $Z_k=1$  for each $k$ almost surely, and  $J^A$  defined in \ref{trasformata DMLa}  reduces to a geometric random variable  $J^A  \overset{d}{=} M$ and thus the counting process $N_A$  reduces to the Bernoulli counting process $N$ defined in section \ref{section bernoulli}.

We now give an important time-change relation regarding $\{N_A(t)\}_{t\in \mathbb{N}_0}$.

\begin{prop}
Let  $\{N(t)\}_{t\in \mathbb{N}_0}$ be a Bernoulli counting process and $\{L_\alpha (t)\}_{t\in \mathbb{N}_0}$ be an independent Sibuya counting process defined in \ref{inverso sub stabile discreto}. For each $t\in \mathbb{N}_0$, the following equality holds in distribution
\begin{align*}
 N_A(t)\overset{d}{=}N(L_\alpha(t)).
 \end{align*}

\end{prop}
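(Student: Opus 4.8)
The plan is to prove the identity $P(N_A(t)=m)=P(N(L_\alpha(t))=m)$ for every $m,t\in\mathbb{N}_0$ by comparing, for each fixed $m$, the generating functions in the time variable; since a sequence is uniquely determined by its generating function, it suffices to show that the two power series in $u$ agree on a neighbourhood of $u=0$. For the left-hand side I would apply \ref{formula utile per i counting processes} to the renewal chain with $DML_A$ waiting times and insert the expression \ref{trasformata DMLa} for $\mathbb{E}u^{J^A}$; writing $w=(1-u)^\alpha$ and using $1+q/p=1/p$, a short simplification gives
\[
\mathcal{G}_{N_A}(m,u)=\frac{1}{1-u}\bigl(\mathbb{E}u^{J^A}\bigr)^m\bigl(1-\mathbb{E}u^{J^A}\bigr)=(1-u)^{\alpha-1}\,\frac{p^m\,(1-w)^m}{\bigl(p+qw\bigr)^{m+1}}.
\]

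For the right-hand side I would condition on the value of $L_\alpha(t)$ and use the independence of $N$ and $L_\alpha$, together with the binomial law $P(N(k)=m)=\binom{k}{m}p^mq^{k-m}$ of the Bernoulli counting process and the generating function $\mathcal{G}_{L_\alpha}(k,u)=(1-u)^{\alpha-1}[\,1-(1-u)^\alpha\,]^{k}$ from \ref{funzione generatrice processo sibuya}:
\[
\sum_{t=0}^\infty u^t\,P\bigl(N(L_\alpha(t))=m\bigr)=\sum_{k=m}^\infty P(N(k)=m)\,\mathcal{G}_{L_\alpha}(k,u)=(1-u)^{\alpha-1}p^m\sum_{k=m}^\infty \binom{k}{m}q^{k-m}(1-w)^{k}.
\]
Substituting $k=m+j$ and summing the resulting series with the identity $\sum_{j\ge0}\binom{m+j}{m}x^j=(1-x)^{-(m+1)}$ (valid here since $1-w\to0$ as $u\to0$, so $|q(1-w)|<1$ near $u=0$), the right-hand side equals $(1-u)^{\alpha-1}p^m(1-w)^m\bigl(1-q(1-w)\bigr)^{-(m+1)}$; as $1-q(1-w)=p+qw$, this is exactly the expression obtained for $\mathcal{G}_{N_A}(m,u)$. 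Hence the two generating functions coincide and $N_A(t)\overset{d}{=}N(L_\alpha(t))$.

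I do not anticipate a genuine obstacle: the only technical points are the interchange of the double summation, legitimate on a neighbourhood of $u=0$ where all the series converge absolutely, and the routine binomial bookkeeping. It is worth noting that the statement can also be read off directly from Theorem \ref{teorema sul time change}: the Bernoulli counting process is a Markov chain of type \ref{seconda definizione caso Markov} with deterministic jump chain $X_n=n$ and geometric waiting times $M_k$, and $L_\alpha$ is the inverse of the Sibuya random walk $\sigma_\alpha$, so $N(L_\alpha(t))$ is a semi-Markov chain of type A whose waiting times are $\sigma_\alpha(M_k)=\sum_{i=1}^{M_k}Z_i$, i.e. have the $DML_A$ law, and whose jump chain is still deterministic — that is, it is precisely the fractional Bernoulli counting process of type A.
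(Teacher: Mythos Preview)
Your proof is correct and follows essentially the same route as the paper: compute the time-generating function of $N_A$ via \ref{formula utile per i counting processes} and \ref{trasformata DMLa}, then compute that of $N(L_\alpha(\cdot))$ by conditioning on $L_\alpha$ and using \ref{funzione generatrice processo sibuya}, and check that the two expressions coincide. The only cosmetic difference is that the paper recognizes the sum $\sum_{k\ge 0}P(N(k)=m)\,v^k$ as the Bernoulli generating function $\mathcal{G}_N(m,v)$ of \ref{funzione generatrice bernoulli} evaluated at $v=1-(1-u)^\alpha$, whereas you perform the same summation explicitly via the negative-binomial identity; the resulting closed form is identical. Your closing remark that the statement is also a direct instance of Theorem \ref{teorema sul time change} is a valid alternative and is not spelled out in the paper's proof.
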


\begin{proof}
By using \ref{formula utile per i counting processes} with waiting times  \ref{trasformata DMLa}, the generating function of $\{N_A(t)\}_{t\in \mathbb{N}}$ reads
\begin{align*}
\mathcal{G}_{N_A}(m,u)= (1-u)^{\alpha -1} \frac{(p-p(1-u)^\alpha)^m}{(p+q(1-u)^\alpha)^{m+1}}.
\end{align*}
The same form can be obtained by computing the generating function of $N(L_\alpha (t))$ by a simple conditioning argument 
\begin{align*}
\mathcal{G}_{N_A}(m,u)& =\sum _{t=0}^\infty u^t P(N(L_\alpha(t))=m)
= \sum _{t=0}^\infty  \sum _{j=0}^\infty u^t P(N(j)=m) P(L_\alpha(t)=j)\\
&= \sum _{j=0}^\infty  P(N(j)=m)  \mathcal{G}_{L_\alpha}(j,u)
=  (1-u)^{\alpha -1} \frac{(p-p(1-u)^\alpha)^m}{(p+q(1-u)^\alpha)^{m+1}}
\end{align*}
where we  used \ref{funzione generatrice processo sibuya} and \ref{funzione generatrice bernoulli}.
\end{proof}

\vspace{0.05cm}

The above proposition shows that $N_A$ exhibits a time-change construction similar to that of the fractional Poisson process. Indeed, while $N_A$ is given by the composition of a Bernoulli with a Sibuya process,  the fractional Poisson process is given by the composition of a Poisson process with an inverse stable subordinator. The meaning of this construction  is clear if we recall that the Bernoulli and Sibuya processes   converge the Poisson process and  to the inverse stable subordinator respectively.

\vspace{1cm}

\subsection{Fractional Bernoulli process of type B}

\begin{defin} Let $M$ be a geometric random variable with law $P(M=k)= pq^{k-1}, \,\, k\in \mathbb{N},$ 
and let the $Z_j$ be i.i.d. Sibuya random variables as in \ref{distribuzione di Sybuya}.
 A random variable  $J_B$ is said to follow a $DML_B$ (e.g. discrete Mittag--Leffler of type B) distribution if it can be expressed as a compound shifted geometric sum
\begin{align*}
J^B= 1+\sum _{k=1}^{M-1} Z_k
\end{align*}
and thus has generating function
\begin{align}
\mathbb{E}u^{J^B}= \frac{ u}{1 + \frac{q}{p}(1-u)^\alpha}, \qquad \alpha \in (0,1). \label{trasformata DMLb}
\end{align}
\end{defin}

\begin{os}
$DML_B$ is a discrete approximation of the Mittag--Leffler distribution \ref{Mittag Leffler distribution}. Indeed by rescaling 
\begin{align}
Z_k \to \frac{Z_k}{n}  \qquad J^B \to \frac{J^B}{n} \qquad \frac{p}{q}=\lambda \to \frac{\lambda}{n^\alpha}, \label{scaling Mittag--Leffler}
\end{align}
we obtain the rescaled random variable
\begin{align}
  J^{B(n)}=\frac{1}{n}+ \frac{1}{n}\sum _{j=1}^{M^{(n)}-1}Z_j,  \label{mittag leffler riscalate}
\end{align}
such  that
\begin{align*}
 \lim_{n\to \infty}\mathbb{E}e^{-sJ^{B(n)}}= \frac{\lambda}{\lambda +s^\alpha}, \quad s\in \mathbb{R}^+,
\end{align*}
where $\lambda/(\lambda +s^\alpha)$ is  the Laplace transform of the Mittag--Leffler distribution (see \ref{trasformata Mittag--Leffler}).
\end{os}

We now define another discrete-time approximation of the fractional Poisson process.

\begin{defin} %\label{definizione fractional Bernoulli}
Let $T_n^B= J_0^B+ J_1^B+\dots J_{n-1}^B$ be a renewal chain of type \ref{renewal chain Tn}, with waiting times $J_0^B, J_1^B, \dots,$ having common $DML_B$ distribution. The related counting process 
\begin{align*}
\{N_B(t)\}_{t\in \mathbb{N}_0}=max \{n\in \mathbb{N}_0: T_n^B\leq t\}
\end{align*}
is called fractional Bernoulli counting process of type B.
\end{defin}

In the case $\alpha =1$, we have $Z_k=1$  for each $k$ almost surely, and   $J^B$ defined in \ref{trasformata DMLb} reduces to a geometric random variable  $ J^B \overset{d}{=} M$ and thus the counting process  $N_B$ reduces to the Bernoulli counting process $N$.

We observe that the process $N_B$ has an interesting connection to fractional calculus, as $p_k(t)= P(N_B(t)=k)$ solves a forward equation which is analogous to \ref{seconda equazione base del processo di Bernoulli} governing $N$, but where the discrete-time derivative $I-\mathcal{B}$ is replaced by its fractional power $(I-\mathcal{B})^\alpha$ :
\begin{align*}
(I-\mathcal{B})^\alpha p(t)= \sum _{k=0}^\infty \binom{\alpha}{k} (-1)^kp(t-k).
\end{align*}

\begin{prop} \label{ricorda}
For $t\in \mathbb{N}_0$, the state probabilities $p_k(t)= P(N_B(t)=k)$ solve the following system 
\begin{align}
(I-\mathcal{B})^\alpha p_k(t)&=-\lambda p_k (t)+\lambda p_{k-1}(t-1), \qquad  k\geq 1, \label{equazione frazionaria con k maggiore di zero}\\
(I-\mathcal{B})^\alpha p_0(t)&-(-1)^t\binom{\alpha -1}{t}= -\lambda p_0(t) +\lambda \delta _{0t}. \label{equazione frazionaria con k uguale a zero} 
\end{align}
under the initial condition $p_k(0)=\delta _{0k}$.
\end{prop}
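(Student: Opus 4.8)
The plan is to work with generating functions in the time variable, exactly as in the proof of Theorem \ref{teorema equazione governante} and the remark following it. Set $\tilde p_k(u)=\sum_{t=0}^\infty u^t p_k(t)$ for $|u|<1$. Since $\{N_B(t)\}_{t\in\mathbb{N}_0}$ is a counting process of the type \ref{processo di conteggio} whose i.i.d.\ waiting times have the $DML_B$ generating function \ref{trasformata DMLb}, formula \ref{formula utile per i counting processes} with $\mathbb{E}u^{J^B}=pu/(p+q(1-u)^\alpha)$ yields the closed form
\[
\tilde p_k(u)=\frac{1}{1-u}\Bigl(\frac{pu}{p+q(1-u)^\alpha}\Bigr)^{k}\Bigl(1-\frac{pu}{p+q(1-u)^\alpha}\Bigr)=\frac{(pu)^k\bigl(p(1-u)+q(1-u)^\alpha\bigr)}{(1-u)\bigl(p+q(1-u)^\alpha\bigr)^{k+1}}.
\]
The initial condition $p_k(0)=\delta_{0k}$ is immediate, since $J_0^B\geq 1$ almost surely forces $N_B(0)=0$.

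First I would record the elementary dictionary between the operators appearing in \ref{equazione frazionaria con k maggiore di zero}--\ref{equazione frazionaria con k uguale a zero} and multiplication operators on generating functions: the shift $\mathcal{B}$ corresponds to multiplication by $u$; the fractional difference $(I-\mathcal{B})^\alpha$ corresponds to multiplication by $(1-u)^\alpha$ (expand $\sum_{j\geq 0}\binom{\alpha}{j}(-u)^j=(1-u)^\alpha$); the sequence $\{(-1)^t\binom{\alpha-1}{t}\}_{t\geq 0}$ has generating function $(1-u)^{\alpha-1}$; and $\{\delta_{0t}\}_{t\geq 0}$ has generating function $1$.

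Next I would apply this dictionary to both sides of the two claimed identities. For $k\geq 1$, taking generating functions in \ref{equazione frazionaria con k maggiore di zero} and clearing the factor $q$ (recall $\lambda=p/q$) reduces the claim to $\bigl(p+q(1-u)^\alpha\bigr)\tilde p_k(u)=pu\,\tilde p_{k-1}(u)$, which is verified at once by substituting the closed form above. Doing the same in \ref{equazione frazionaria con k uguale a zero} reduces the $k=0$ claim to $\bigl(p+q(1-u)^\alpha\bigr)\tilde p_0(u)=p+q(1-u)^{\alpha-1}$, again an immediate check from the closed form. Since a sequence on $\mathbb{N}_0$ is uniquely determined by its generating function in a neighbourhood of the origin, this establishes the whole system.

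There is no genuinely deep step here; the only points needing care are bookkeeping ones --- correctly translating the inhomogeneous term $(-1)^t\binom{\alpha-1}{t}$ (which equals $P(Z>t)$ for $Z\sim Sibuya(\alpha)$, consistently with the $P(Z>t)$ term of Theorem \ref{teorema equazione governante}) into $(1-u)^{\alpha-1}$, treating the boundary case $k=0$, and tracking the constant $\lambda=p/q$ throughout. An essentially equivalent alternative is to note that $\{N_B(t)\}_{t\in\mathbb{N}_0}$ is exactly a semi-Markov chain of type B in the sense of Definition \ref{defin semimarkov }, with underlying chain of deterministic unit jumps ($H_{il}=1$ iff $l=i+1$), constant $p_i\equiv p$, and $Z\sim Sibuya(\alpha)$ --- for which $\widetilde{\mathcal{D}}_t=(I-\mathcal{B})^\alpha$ and $P(Z>t)=(-1)^t\binom{\alpha-1}{t}$ --- and then to read \ref{equazione frazionaria con k maggiore di zero}--\ref{equazione frazionaria con k uguale a zero} off from \ref{Equazione frazionariaaa} with $i=0$, using spatial homogeneity $p_{ij}(t)=p_{j-i}(t)$ and the convention $p_{-1,\cdot}\equiv 0$.
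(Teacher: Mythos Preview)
Your proof is correct and follows essentially the same generating-function route as the paper: the paper transforms the equations, solves the resulting recursion to obtain $\tilde p_k(u)=\frac{(\lambda u)^k[\lambda+(1-u)^{\alpha-1}]}{[\lambda+(1-u)^\alpha]^{k+1}}$, and then matches this against \ref{formula utile per i counting processes} with waiting-time law \ref{trasformata DMLb}, whereas you start from the closed form \ref{formula utile per i counting processes} and verify the transformed equations directly --- the same computation run in the opposite direction.
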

%\textbf{N.B: La derivata di Riemann sarebbe (1-B) alla alfa; forse bisogna correggere l'equazione con p0 per far venire la corretta derivata di Caputo. Oppure va ricavata dalla derivata di marchaud con i=j come farebbe meerschaert. Oppure aiutarsi col calcolo della funzione di sopravvivenza della sybuya che viene calcolata sul samko} 

\begin{proof}
By computing the generating function of both members of \ref{equazione frazionaria con k uguale a zero} one has
\begin{align*}
(1-u)^\alpha \tilde{p}_0(u)-(1-u)^{\alpha -1}=-\lambda \tilde{p}_0(u)+\lambda,
\end{align*}
which gives 
\begin{align*}
\tilde{p}_0(u)= \frac{\lambda +(1-u)^{\alpha -1}}{\lambda +(1-u)^\alpha}.
\end{align*}
By further computing the generating function of both members of \ref{equazione frazionaria con k maggiore di zero} one has
\begin{align*}
(1-u)^\alpha \tilde{p}_k(u)= -\lambda \tilde{p}_k(u)+ \lambda u\, \tilde{p}_{k-1}(u).
\end{align*}
Solving the last equation by iteration, we have
\begin{align*}
\tilde{p}_k(u)& = \frac{\lambda u}{\lambda + (1-u)^\alpha }\tilde{p}_{k-1}(u)
= \frac{(\lambda u)^k}{[\lambda + (1-u)^\alpha]^k} \tilde{p}_0(u)
= \frac{(\lambda u)^k [ \lambda +(1-u)^{\alpha -1}]}{[\lambda +(1-u)^\alpha]^{k+1}},
\end{align*}
which coincides with the generating function obtained by \ref{formula utile per i counting processes} with waiting times  \ref{trasformata DMLb}, and this concludes the proof.
\end{proof}

\begin{os}
Note that for $t\neq 0$ equations \ref{equazione frazionaria con k maggiore di zero} and \ref{equazione frazionaria con k uguale a zero} can be written in compact form as
\begin{align}
(I-\mathcal{B})^\alpha p_k(t) - P(Z>t) p_k(0)=-\lambda p_k (t)+\lambda p_{k-1}(t-1), \qquad t\in  \mathbb{N}, \label{equazione frazionaria compatta}
\end{align}
where $Z$ is the Sibuya random variable defined in \ref{distribuzione di Sybuya}. 

We let  the time steps have size $1/n$ and, following \ref{scaling Mittag--Leffler}, we scale $\lambda \to \lambda/n^\alpha$ and $Z\to Z/n$. Thus \ref{equazione frazionaria compatta} becomes
\begin{align*}
(I-\mathcal{B}_{\frac{1}{n}})^\alpha p_k(t) - P\biggl(\frac{Z}{n}>t \biggr) p_k(0)=- \frac{\lambda}{n^\alpha}\, p_k (t)+\frac{\lambda}{n^\alpha}\, p_{k-1}(t-\frac{1}{n}), \qquad t\in  \biggl \{\frac{1}{n}, \frac{2}{n},\dots \biggr \}, 
\end{align*}
where $\mathcal{B}_{\frac{1}{n}} p_k(t)=p_{k}\bigl (t-\frac{1}{n} \bigr)$,
namely
\begin{align*}
\biggl ( \frac{I-\mathcal{B}_{\frac{1}{n}}}{\frac{1}{n}} \biggr )^\alpha p_k(t) - n^\alpha P(Z>nt)p_k(0)=-\lambda p_k(t)+\lambda p_{k-1}\bigl (t-\frac{1}{n} \bigr ),\qquad t\in  \biggl \{\frac{1}{n}, \frac{2}{n},\dots \biggr \}.
\end{align*}
We let $n \to \infty$ and recall that the operator on the left-hand side 
\begin{align*}
\lim _{n\to \infty}  \biggl ( \frac{I-\mathcal{B}_{\frac{1}{n}}}{\frac{1}{n}} \biggr )^\alpha   
\end{align*}
is known as Gr\"unwald--Letnikov derivative (see [\cite{samko} Chapter 4, sect. 20]), and coincides with the fractional Riemann--Liouville derivative. Moreover, by using \ref{first} and \ref{Tricomi formula}, we have
\begin{align*}
P(Z>nt)=(-1)^{nt}\binom{\alpha -1}{nt}\sim \frac{(nt)^{-\alpha}}{\Gamma (1-\alpha)}, \qquad as \,\,\, n\to \infty.
\end{align*}
 Thus, formally, the limiting  equation coincides with the  forward equation \ref{equazione fractional Poisson} governing the fractional Poisson process.
\end{os}

\begin{os}
We note that \ref{ricorda} is in the forward form. By adapting  \ref{Equazione frazionariaaa}  we can also write the backward equation. Indeed, for $\lambda _i=\lambda$  $\forall i\in \mathcal{S}$, $H_{ij}=1$ if $j=i+1$ and $Z$ following the Sibuya distribution, then the waiting times follow a $DML_B$ distribution (see \ref{trasformata DMLb}) and  \ref{Equazione frazionariaaa}  reduces to
\begin{align*}
(I-\mathcal{B})^\alpha p_{ij}(t) - (-1)^t\binom{\alpha -1}{t} \delta _{ij}= \lambda p_{(i+1),\, j}(t-1)-\lambda p_{ij}(t)+\lambda \delta _{ij}\delta _{0\, t}.
\end{align*}
\end{os}

\subsection{Convergence to the fractional Poisson process.}
We finally prove that, under a suitable limit, both the fractional Bernoulli counting processes $\{N_A(t)\}_{t\in \mathbb{N}_0}$ and  $\{N_B(t)\}_{t\in \mathbb{N}_0}$ defined in this section converge to a fractional Poisson process. The convergence holds in the sense of finite dimensional distributions and also in $J_1$ Skorokhod sense.  

Let $J_0^{A(n)}, J_1^{A(n)},\dots$ be i.i.d. copies of $J^{A(n)}$, defined in \ref{mittag leffler riscalate A}, and let     $T_k^{A(n)}= J_0^{A(n)}+ J_1^{A(n)}+\dots+J_{k-1}^{A(n)}$ be the renewal chain associated with the counting process 
\begin{align*}
N_A^{(n)}(t)= \max \{ y\in \mathbb{N}_0: T^{A(n)}_y\leq t\} \qquad t\in \mathbb{R}^+.
\end{align*}
 Moreover, let $J_0^{B(n)}, J_1^{B(n)},\dots$ be i.i.d.\ copies of $J^{B(n)}$ defined in \ref{mittag leffler riscalate} and let    $T_k^{B(n)}= J_0^{B(n)}+ J_1^{B(n)}+\dots+J_{k-1}^{B(n)}$ be the renewal chain associated with the counting process 
\begin{align*}
 N_B^{(n)}(t)= \max \{ y\in \mathbb{N}_0: T^{B(n)}_y\leq t\} \qquad  t\in \mathbb{R}^+. 
 \end{align*}
 Finally, let  $W_0, W_1,\dots,$ be i.i.d.\ random variables with common Mittag--Leffler law \ref{Mittag Leffler distribution} and let $T_k= W_0+ W_1+\dots+W_{k-1}$ be the renewal process whose counting process is the fractional Poisson process 
 \begin{align*}
 \Pi(t)= \max \{ y\in \mathbb{N}_0: T_y\leq t\},\qquad t\in \mathbb{R}^+.
\end{align*}

\begin{prop} \label{convergenza fdd}
Under the scaling limit \ref{scaling Mittag--Leffler} and \ref{scaling Mittag--Leffler A}, we have that:

a) for $n\to \infty$ ,
\begin{align*}
\{N_A ^{(n)}(t)\}_{t\in \mathbb{R}^+}\overset{fdd}{\longrightarrow} \{\Pi (t)\}_{t\in \mathbb{R}^+}\qquad \{N_B ^{(n)}(t)\}_{t\in \mathbb{R}^+}\overset{fdd}{\longrightarrow} \{\Pi (t)\}_{t\in \mathbb{R}^+}.
\end{align*}
where $\overset{fdd}{\longrightarrow}$ denotes convergence of finite dimensional distributions.

b)  for $n\to \infty$,
\begin{align*}
\{N_A ^{(n)}(t)\}_{t\in \mathbb{R}^+}\overset{J_1}{\longrightarrow} \{\Pi (t)\}_{t\in \mathbb{R}^+}\qquad \{N_B ^{(n)}(t)\}_{t\in \mathbb{R}^+}\overset{J_1}{\longrightarrow} \{\Pi (t)\}_{t\in \mathbb{R}^+} \qquad \textrm{on}\quad D[0,\infty),
\end{align*}
where $\overset{J_1}{\longrightarrow}$ denotes convergence in weak sense under the $J_1$ Skorokhod topology on the space of c\'adl\'ag functions $D[0,\infty)$.

\end{prop}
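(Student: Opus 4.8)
The plan is to realise all three processes $\{N_A^{(n)}(t)\}_{t\in\mathbb{R}^+}$, $\{N_B^{(n)}(t)\}_{t\in\mathbb{R}^+}$ and $\{\Pi(t)\}_{t\in\mathbb{R}^+}$ as first-passage (inverse) processes of renewal random walks, and to read off the convergence of the counting processes from that of the underlying walks. The input I would take from earlier in the paper is the content of the Remarks following the definitions of $DML_A$ and $DML_B$: under the scalings \ref{scaling Mittag--Leffler A} and \ref{scaling Mittag--Leffler} one has $\mathbb{E}e^{-sJ^{A(n)}}\to\lambda/(\lambda+s^\alpha)$ and $\mathbb{E}e^{-sJ^{B(n)}}\to\lambda/(\lambda+s^\alpha)$ for every $s\geq0$, so by the continuity theorem for Laplace transforms $J^{A(n)}\overset{d}{\to}W$ and $J^{B(n)}\overset{d}{\to}W$, where $W$ has the Mittag--Leffler law \ref{Mittag Leffler distribution} (absolutely continuous, being $-\frac{d}{dt}\mathcal{E}(-\lambda t^\alpha)$). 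Since for each $n$ the waiting times are i.i.d., the joint law of any finite block $(J_0^{A(n)},\dots,J_{K-1}^{A(n)})$ converges to that of $(W_0,\dots,W_{K-1})$, and by continuity of partial summation the renewal times converge jointly: $(T_1^{A(n)},\dots,T_K^{A(n)})\overset{d}{\to}(T_1,\dots,T_K)$, and likewise for the $B$-walk.

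For part (a) I would argue as follows. Fix $0<t_1<\dots<t_m$ and nonnegative integers $k_1\leq\dots\leq k_m$ (the remaining cases giving $0\to0$ since $N_A^{(n)}$ is nondecreasing). Because $\{N_A^{(n)}(t)\geq k\}=\{T_k^{A(n)}\leq t\}$, the event $\{N_A^{(n)}(t_i)=k_i,\ i=1,\dots,m\}$ is a finite intersection of events $\{T_{k_i}^{A(n)}\leq t_i<T_{k_i+1}^{A(n)}\}$, i.e.\ a function of finitely many renewal times; as the limiting vector $(T_{k_1},T_{k_1+1},\dots)$ is absolutely continuous, the corresponding indicator is a.s.\ continuous at the limit, so
\begin{align*}
P\big(N_A^{(n)}(t_i)=k_i,\ \forall i\big)\longrightarrow P\big(T_{k_i}\leq t_i<T_{k_i+1},\ \forall i\big)=P\big(\Pi(t_i)=k_i,\ \forall i\big),
\end{align*}
the last equality being the definition of $\Pi$ as the renewal counting process of the i.i.d.\ Mittag--Leffler sequence $\{W_j\}$. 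The same computation works verbatim for $N_B^{(n)}$, which yields the finite-dimensional convergence.

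For part (b) I would embed each renewal walk as the c\'adl\'ag step function $S^{A(n)}(u)=T^{A(n)}_{\lfloor u\rfloor}$ (and likewise $S^{B(n)}$ and $S(u)=T_{\lfloor u\rfloor}$), and pass to a Skorokhod coupling so that $T_k^{A(n)}\to T_k$ and $T_k^{B(n)}\to T_k$ a.s. Since all these functions jump only at the integers, $J_1$ convergence $S^{A(n)}\to S$ and $S^{B(n)}\to S$ on $D[0,\infty)$ is equivalent to the (now almost sure) joint convergence of the renewal times; moreover $S$ is strictly increasing along the integers and diverges to $+\infty$, being the partial-sum function of the a.s.\ positive i.i.d.\ variables $W_j$. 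Now $N_A^{(n)}$, $N_B^{(n)}$ and $\Pi$ are the first-passage functionals of $S^{A(n)}$, $S^{B(n)}$ and $S$, and I would apply the continuous mapping theorem, in the spirit of the proof of Theorem \ref{teorema convergenze} and of \cite{becker,meertri,meerscheflimctrw}, to conclude $N_A^{(n)}\overset{J_1}{\to}\Pi$ and $N_B^{(n)}\overset{J_1}{\to}\Pi$ on $D[0,\infty)$. Concretely, on any compact time window only finitely many renewals occur, so the time change carrying each $T_k$ to $T_k^{A(n)}$ (resp.\ $T_k^{B(n)}$) sends the limit counting process onto the prelimit one up to a sup-norm error of size $\max_k|T_k^{A(n)}-T_k|\to0$, which is precisely $J_1$ convergence.

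I expect the main obstacle to be exactly this last topological step: verifying the $J_1$ continuity of the first-passage/inversion functional at the limiting renewal path. In general the inversion map fails to be $J_1$ continuous at step functions, but here the geometry is favourable, because the prelimit walks jump at the same (integer) epochs as the limit, so that path convergence reduces to convergence of the jump heights $T_k^{A(n)}\to T_k$, $T_k^{B(n)}\to T_k$; once that is in hand, the coupling argument above closes the gap. The other ingredient I need---weak convergence of the rescaled waiting times to the Mittag--Leffler law---has already been recorded in the Remarks of this section and can be quoted rather than reproved.
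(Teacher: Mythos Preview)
Your argument for part (a) is essentially the same as the paper's: both reduce the finite-dimensional distribution of the counting process to a finite vector of renewal times $(T_1^{(n)},\dots,T_K^{(n)})$, obtain joint convergence of these via the continuous mapping $h(x_0,\dots,x_{r-1})=(x_0,x_0+x_1,\dots)$ applied to the i.i.d.\ waiting times, and read off convergence of the counting-process probabilities. Your explicit mention of the absolute continuity of the limiting Mittag--Leffler law to handle the boundary of the event is a nice touch the paper leaves implicit.

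For part (b) you take a genuinely different route. The paper dispatches the $J_1$ convergence in one line by invoking Theorem~3 of Bingham~\cite{bingham}: since $N_A^{(n)}$ (and $N_B^{(n)}$) have nondecreasing sample paths and converge in the sense of finite-dimensional distributions to the fractional Poisson process $\Pi$, which is continuous in probability, $J_1$ convergence follows automatically. Your approach instead constructs an almost-sure Skorokhod coupling of the renewal times and builds the $J_1$ time-change by hand, mapping $T_k$ to $T_k^{A(n)}$ piecewise linearly on each compact window. This is correct and more self-contained, and it avoids reliance on the external reference; the key observation that makes it work---that the prelimit and limit counting processes jump only at the (a.s.\ convergent) renewal epochs, so only finitely many jumps are in play on any bounded interval---is exactly what lets you bypass the general failure of $J_1$-continuity of the inversion map that you correctly flag as the obstacle. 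The cost is a longer argument; the gain is that you do not need to know Bingham's criterion.
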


\begin{proof}

a) The proof proceeds along the same lines for both processes $A$ and $B$. For the sake of brevity, we only show the proof for  case $A$.
Since $J_k^{A(n)}\overset{d}{\longrightarrow}W_k$ for each $k$, where $W_k$ has a Mittag--Leffler distribution \ref{Mittag Leffler distribution}, then, fixed $r\in \mathbb{N}$, the vector $\bigl (J_0^{A(n)}, J_1^{A(n)},\dots, J_{r-1}^{A(n)}\bigr)$ converges in distribution to $\bigl (W_0, W_1,\dots, W_{r-1}\bigr)$. Let $T_k=W_0+W_1+\dots + W_{k-1}$. 
By considering the function $h(x_0,\,x_1,\, \dots , x_{r-1})=(x_0,\, x_0+x_1,\, x_0+x_1+x_2,\, \dots, x_0+x_1+\dots +x_{r-1})$, a continuous mapping argument gives
\begin{align*}
\bigl (T_1^{A(n)},\dots , T_r^{A(n)}\bigr) = h(J_0^{A(n)}, J_1^{A(n)},\dots, J_{r-1}^{A(n)}\bigr) \overset{d}{\to} h\bigl (W_0, W_1,\dots, W_{r-1}\bigr)= \bigl (T_1,\dots , T_r\bigr).
\end{align*}

Hence, by fixing $r$ times $t_1,\dots, t_r$ in $\mathbb{R}^+$ and $r$ numbers $m_1,\dots, m_r$ in $\mathbb{N}$, we have
\begin{align*}
P\bigl ( N_A^{(n)}(t_1)\leq m_1, \dots, N_A^{(n)}(t_r)\leq m_r\bigr)= P\bigl (T^{A(n)}_{m_1}\geq t_1,\dots, T^{A(n)}_{m_r}\geq t_r\bigr)\\
 \overset{n\to \infty}{\longrightarrow}P\bigl (T_{m_1}\geq t_1,\dots, T_{m_r}\geq t_r\bigr)
= P\bigl ( \Pi(t_1)\leq m_1, \dots, \Pi(t_r)\leq m_r\bigr).
\end{align*} 
b) It is sufficient to apply Theorem 3 in \cite{bingham}. Indeed $N_A^{(n)}(t)$ has non decreasing sample paths and, according to the proof of statement a) above, it converges in the sense of finite dimensional distribution to a fractional Poisson process, the latter being continuous in probability.

\end{proof}

\begin{os}\label{abc}
There are many possible discrete-time approximations of the Fractional Poisson process (at least one for each different discrete version of the Mittag--Leffler distribution).
We have only presented two of these possible approximating processes. None of these two is trivially given by sampling the fractional Poisson process at integer times (as well as the Bernoulli process is not given by the  Poisson process sampled at integer times). Indeed, by sampling the fractional Poisson process at integer times, we obtain a process with jumps of size possibly greater than 1. In general, the discrete-time semi-Markov processes treated in this paper are not obtained by sampling the related  continuous-time processes at discrete times.
\end{os}

\vspace{0.7cm}

\textbf{Acknowledgments:} 
F. Polito and C. Ricciuti have been partially supported by the project "Memory in Evolving Graphs" (Compagnia di San Paolo-Università di Torino). F. Polito has been also partially supported by INdAM/GNAMPA.

A. Pachon has been partially supported by the project "Complex Networks and Data Science" (University of South Wales).

\end{document}